\pgfplotsset{compat=1.17} 
\theoremstyle{plain}
\newtheorem{theorem}{Theorem}[section]
\newtheorem{lemma}{Lemma}[section]
\newtheorem{proposition}{Proposition}[section]
\newtheorem{corollary}{Corollary}[section]
\newtheorem*{summarytheorem}{Main Theorem}
\newtheorem{question}{Question}
\theoremstyle{remark}
\theoremstyle{definition}
\newtheorem{definition}{Definition}[section]
\def\makeautorefname#1#2{\expandafter\def\csname#1autorefname\endcsname{#2}}
\let\fullref\autoref
\let\c@lemma=\c@theorem 
\let\c@proposition=\c@theorem 
\let\c@corollary=\c@theorem 
\let\c@definition=\c@theorem 
\let\c@example=\c@theorem 
\newcommand{\bdry}{\partial} 
\newcommand{\act}{\curvearrowright} 
\newcommand{\from}{\colon\thinspace} 
\newcommand{\bp}{o}
\renewcommand{\setminus}{-}
\DeclareMathOperator{\stab}{Stab}
\DeclareMathOperator{\cay}{Cay}
\DeclareMathOperator{\girth}{girth}
\DeclareMathOperator{\diam}{diam}
\DeclareMathOperator{\sym}{Sym}
\DeclareMathOperator{\alt}{Alt}
\DeclareMathOperator{\lcm}{lcm}
\DeclareMathOperator{\lcmleq}{\underline{lcm}}
\DeclareMathOperator{\oddlcmleq}{\underline{oddlcm}}
\DeclareMathOperator{\pgammal}{P\Gamma L}
\DeclareMathOperator{\pgl}{PGL}
\DeclareMathOperator{\meo}{meo}
\DeclareMathOperator{\ord}{ord}
\DeclareMathOperator{\Landau}{\Lambda}
\renewcommand{\ln}{\operatorname{log}}
\title{Biggs tree groups}
\date{\today}
\subjclass[2020]{Primary 20B05; Secondary 05C25, 05C38, 20B20}
\keywords{Biggs groups, Cayley graphs, large girth, primitive permutation groups}
\author{Christopher H.\ Cashen} 
\address{TU Wien\\  Institute of Discrete Mathematics and Geometry\\Wiedener
  Hauptstrasse~8-10\\1040 Vienna \\Austria\\\href{https://orcid.org/0000-0002-6340-469X}{\includegraphics[scale=.75]{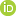}~0000-0002-6340-469X}}
\email{christopher.cashen@tuwien.ac.at}
\begin{document}
\begin{abstract}
 Biggs gave an explicit construction, using finite colored trees, of finite permutation groups whose Cayley graphs have valence $C$ and girth tending to infinity as the radius $R$ of the tree tends to infinity.
We show that when the number of colors is at least 3, the group so presented contains the full
alternating group on the vertices of the tree.

This gives, for each $C\geq 3$, an infinite family of pairs $(G_{C,R},S_{C,R})$
such that $G_{C,R}$ is an alternating or symmetric group, $S_{C,R}$ is a
generating set of $G_{C,R}$ of size $C$ with an explicit permutation
description of its generators, and such that the sequence of Cayley
graphs $\cay(G_{C,R},S_{C,R})$ has constant valence $C$
and girth tending to infinity as $R$
tends to infinity.

\end{abstract}
\maketitle

\section{Introduction}
Let $C$ be a finite set of colors, and let $\Gamma$ be a finite
simplicial graph such that each edge is assigned a color from $C$, and
such that no two edges incident to a common vertex have the same
color (a \emph{proper coloring}).
We may assume every color of $C$ is actually used on some edge. 
Then each $c\in C$ determines a nontrivial permutation of the vertices of $\Gamma$ by
swapping the two vertices of each edge of color $c$.
Let $G$ be the permutation group generated by $C$.

Biggs \cite{MR342422} calls the colored graph a `picture' and the
group $G$ the `story of the picture'.
See also the survey \cite{Big98}.
Later works, eg \cite{MR1127339},  refer to $G$ as a `Biggs group',
although this is a bit of a misnomer, as it is the
choice generating set that is novel, not the group itself. 

By construction, each $c\in C$ is an involution, since it is a product
of disjoint transpositions, so the Cayley graph $\mathrm{Cay}(G,C)$ of $G$ with respect to
the generating set $C$ is a $C$--valent\footnote{Whenever $C$ appears
  where an integer is expected, interpret the notation to mean $|C|$.}, undirected graph. 
Of particular interest is the case that $\Gamma=T_{C,R}$ is the closed ball
of radius $R$ centered at a root vertex $\bp$ of the $C$--regular
$C$--colored tree $T_C$.
The number of vertices of $T_{C,R}$ is
$N_{C,R}:=\frac{C(C-1)^R-2}{C-2}$ when $C>2$.
By numbering the vertices 1,\dots,$N_{C,R}$, in whatever way, we may
consider the permutation group $G_{C,R}$ as a subgroup of $\mathrm{Sym}(N_{C,R})$.
For $C=3$, Biggs \cite{Big89} showed that $\mathrm{girth}(\mathrm{Cay}(G_{C,R},C))\geq
2R+1$.
This was the first example of a family of cubic (3--valent) Cayley graphs with
unbounded girth.
The girth argument also generalizes to $C\geq 3$
\cite[Theorem~19]{dynamic_cage_survey}.

It is not hard to see (\cite[Lemma~7.1]{Big98} or
\fullref{dichrome_is_dihedral}) that $G_{2,R}$ is a finite dihedral
group.
Exoo and Jajcay \cite{ExoJaj12} note that in small examples where
$G_{C,R}$ can be explicitly computed, it turns out that when $C>2$ the
group $G_{C,R}$ is  the
full symmetric group or the alternating group.
However, $|\mathrm{Sym}(N_{C,R})|$ grows
quickly, and only for the very smallest examples can the isomorphism
type of $G_{C,R}$ be checked by computer. 
The purpose of this paper is to explain the experimental observation of Exoo and
Jajcay.
Our main tasks are to use the colored tree $T_{C,R}$ to show that
$G_{C,R}$ is 2--transitive, hence primitive, and to produce some
cycles. 
Then we can apply a theorem (\fullref{jones}) of Jones that uses the Classification
of Finite Simple Groups to refine the Jordan Symmetric Group Theorem. 
This leads quickly to the conclusion
that either $G_{C,R}$ contains the alternating
group or it is a subgroup of some projective semi-linear group
of the same permutation degree.
We rule out the latter using a variety of methods,
applying to different ranges of $C$ and $R$:
\begin{enumerate}
\item Demonstrate the existence of cycles with many fixed points.
  \item Show there is no projective semi-linear group of the same
    degree.
    \item Show that $G_{C,R}$ contains an element whose order is too
      high relative to the permutation degree for it to
      occur in a projective semi-linear group. 
\end{enumerate}
The conclusion is the main theorem of this paper, whose proof occupies \fullref{sec:main_theorem}.
\begin{summarytheorem}
For $C\geq 3$:
  \[G_{C,R}\cong
    \begin{cases}
      \alt(N_{C,R})\quad \text{ if $C$
        and $R$ are even}\\
      \sym(N_{C,R})\quad \text{otherwise}
    \end{cases}\]
\end{summarytheorem}

\fullref{fig:nauru} shows $\cay(G_{C,R},C)$ for $C=\{r,g,b\}$ and
$R=1$ embedded in a torus, obtained by identifying opposite
sides of the dashed hexagon.
It is a Cayley graph of $\sym(4)$ that is isomorphic to the
Nauru graph.
The order of $G_{3,2}$ is already $10!=3628800$, so we
will not try to draw its Cayley graph. 

\begin{figure}[h]
  \centering
  \begin{tikzpicture}\tiny
    \pgfmathsetmacro{\r}{1.25};
    \pgfmathsetmacro{\x}{\r/2};
    \pgfmathsetmacro{\y}{-\r*sqrt(3)/2};
    \coordinate (O) at (0,0);
    \node (o) at (O) {$()$};
    \coordinate (R) at (0:\r);
    \node (r) at (R) {$(01)$};
    \coordinate (G) at (240:\r);
    \node (g) at (G) {$(02)$};
    \coordinate (B) at (120:\r);
    \node (b) at (B) {$(03)$};
    \coordinate (RB) at ($(R)+(60:\r)$);
    \node (rb) at (RB) {$(013)$};
    \coordinate (RG) at ($(R)+(-60:\r)$);
    \node (rg) at (RG) {$(012)$};
    \coordinate (GR) at ($(G)+(-60:\r)$);
    \node (gr) at (GR) {$(021)$};
    \coordinate (GB) at ($(G)+(180:\r)$);
    \node (gb) at (GB) {$(023)$};
    \coordinate (BG) at ($(B)+(180:\r)$);
    \node (bg) at (BG) {$(032)$};
    \coordinate (BR) at ($(B)+(60:\r)$);
    \node (br) at (BR) {$(031)$};
    \coordinate (RBR) at ($(RB)+(120:\r)$);
    \node (rbr) at (RBR) {$(13)$};
    \coordinate (RGR) at ($(RG)+(240:\r)$);
    \node (rgr) at (RGR) {$(12)$};
    \coordinate (GBG) at ($(GB)+(120:\r)$);
    \node (gbg) at (GBG) {$(23)$};
    \coordinate (GBR) at ($(GB)+(240:\r)$);
    \node (gbr) at (GBR) {$(0231)$};
     \coordinate (GRB) at ($(GR)+(240:\r)$);
     \node (grb) at (GRB) {$(0213)$};
     \coordinate (GRBR) at ($(GRB)+(180:\r)$);
     \node (grbr) at (GRBR) {$(02)(13)$};
     \coordinate (GRBG) at ($(GRB)+(-60:\r)$);
     \node (grbg) at (GRBG) {$(132)$};
      \coordinate (RGRB) at ($(RGR)+(-60:\r)$);
      \node (rgrb) at (RGRB) {$(03)(12)$};
      \coordinate (RGRBG) at ($(RGRB)+(240:\r)$);
      \node (rgrbg) at (RGRBG) {$(0321)$};
       \coordinate (RGB) at ($(RG)+(0:\r)$);
       \node (rgb) at (RGB) {$(0123)$};
        \coordinate (RGBR) at ($(RGB)+(-60:\r)$);
        \node (rgbr) at (RGBR) {$(123)$};
        \coordinate (RGRBR) at ($(RGRB)+(0:\r)$);
        \node (rgrbr) at (RGRBR) {$(0312)$};
    \coordinate (RBG) at ($(RB)+(0:\r)$);
    \node (rbg) at (RBG) {$(0132)$};
     \coordinate (RGBG) at ($(RGB)+(60:\r)$);
    \node (rgbg) at (RGBG) {$(01)(23)$};
   \draw[ultra thick, red] (o)--(r) (g)--(gr) (b)--(br) (rb)--(rbr)
   (rg)--(rgr) (gb)--(gbr) (grb)--(grbr) (rgb)--(rgbr) (rgrb)--(rgrbr)
   (bg)--($(BG)+(120:\r/2)$) (rgrbg)--($(RGRBG)+(-60:\r/2)$)
   (rbg)--($(RBG)+(60:\r/2)$) (grbg)--($(GRBG)+(240:\r/2)$)
   (gbg)--($(GBG)+(180:\r/2)$) (rgbg)--($(RGBG)+(0:\r/2)$);
   \draw[ultra thick, green] (o)--(g) (r)--(rg) (b)--(bg) (gr)--(rgr)
   (gb)--(gbg) (grb)--(grbg) (rgrb)--(rgrbg) (rb)--(rbg) (rgb)--(rgbg)
   (br)--($(BR)+(120:\r/2)$) (rgrbr)--($(RGRBR)+(-60:\r/2)$)
   (rbr)--($(RBR)+(60:\r/2)$) (grbr)--($(GRBR)+(240:\r/2)$)
   (gbr)--($(GBR)+(180:\r/2)$) (rgbr)--($(RGBR)+(0:\r/2)$);
   \draw[ultra thick, blue] (o)--(b) (r)--(rb) (g)--(gb) (br)--(rbr)
   (bg)--(gbg) (gr)--(grb) (gbr)--(grbr) (rgr)--(rgrb) (grbg)--(rgrbg)
   (rg)--(rgb) (rgbr)--(rgrbr) (rbg)--(rgbg);
   \begin{scope}[shift={(\x,\y)}]
     \draw[dashed] (30:3.5*\r)--(90:3.5*\r)--(150:3.5*\r)--(210:3.5*\r)--(270:3.5*\r)--(330:3.5*\r)--(30:3.5*\r);
   \end{scope}
  \end{tikzpicture}
  \caption{$\cay(G_{\{r,g,b\},1},\{r,g,b\})$, vertices labeled by
    permutations 
    of vertices of $T_{3,1}$ with $\bp=0$, $\bp.r=1$,
    $\bp.g=2$, and $\bp.b=3$.}
  \label{fig:nauru}
\end{figure}

\subsection{Comparison to other results}
There is a long history in graph theory and in group theory of producing families of bounded valence graphs realizing extreme properties. We focus on girth and diameter in Cayley graphs of finite groups, with fixed number of generators, hence fixed valence.
It is known that all possible combinations of valence and girth do indeed occur \cite{MR2846070,MR3070127}.
There is a tension between girth and diameter in the sense that the na\"ive estimates (Moore bounds) comparing balls in the graphs to balls in a tree of the same valence show that diameter grows at least logarithmically and girth grows at most logarithmically in terms of number of vertices of the graph. 
This makes it a challenge to produce families in which the diameter-to-girth ratio stays bounded.

The property of being an \emph{expander family} implies logarithmic diameter growth, but does not guarantee even that girth is unbounded.
In the Cayley graph setting, most constructions of expander families use finite groups of Lie type.
The faster growth of alternating groups makes finding expander families of alternating groups harder, although it is possible, by work of Kassabov \cite{MR2342639}.
Unlike many of the Lie type expander families, ie those of Margulis \cite{MR671147}, Lubotzky, Phillips, and Sarnak \cite{MR963118}, and Arzhantseva and Biswas \cite{ArzBis22}, Kassabov's construction does not simultaneously produce large girth.
See \cite[Introduction]{ArzBis22} for a discussion and extensive bibliography.

There do exist choices of generating sets of alternating and symmetric groups that produce large girth families, due to work  of Gamburd, Hoory, Shahshahani, Shalev, and Vir\'ag \cite{MR2532876},  but these are existence results using probabilistic methods.

We compare the behavior of the groups $G_{C,R}$ with these contexts.

\subsection*{Random Cayley graphs}
Gamburd et al \cite{MR2532876} show that if one chooses a generating set of fixed size uniformly at random in a sufficiently large finite group from within certain families, then almost surely
the resulting Cayley graphs have high girth.
For the symmetric group their lower bound is roughly the square root of the logarithm of the order of the group \cite[Theorem~3]{MR2532876}, although they conjecture the square root can be removed. 
These results are probabilistic, not constructive.

In contrast, the Biggs tree construction is entirely explicit: the generators are the colors in $C$, and their action is described combinatorially via tracks in the colored tree, see \fullref{sec:tracks}.
While we do not know the precise girth, we give an upper bound in \fullref{girth_upper_bound} that we conjecture is the true girth.
This upper bound is exponential in $R$.
Coupled with the main theorem saying that $G_{C,R}$ is the full
symmetric or alternating group, this would say that the girth $g$ satisfies $g\asymp\frac{\ln(|G_{C,R}|)}{\ln(\ln(|G_{C,R}|))}$.
This falls in between the proven and conjectured asymptotic girth growth rates 
from Gamburd, et al, and has the advantage of coming from an explicit construction. 

\subsection*{Symmetric/alternating expanders}
Kassabov~\cite{MR2342639} constructed bounded valence expander families in alternating and symmetric groups. 
He writes \cite[Remark~5.1]{MR2342639} that the sizes of generating sets can be made fairly small, $\sim 20$.
The structure of the paper is to prove expansion properties with respect to generating sets that are orders of magnitude larger and then do further work to control the ratio of Kazhdan constants when passing to a suitably well behaved smaller generating set.
Although a stated goal of this paper is to construct `explicit' generating sets, it makes extensive use of the representation theory of the symmetric groups.
It also makes no attempt to control girth. 

In contrast, the Biggs tree groups can be generated with as few as
three generators, and these generators can be written down explicitly
as permutations. 
As far as we know, these are the first explicit families of Cayley graphs of symmetric groups with unbounded girth. 
We have not considered their expansion properties in this paper. 
One might hope that the explicit nature of the construction will allow for further advances in this direction, without the heavy algebraic requirements. 

\subsection*{Diameter-to-girth ratio}
Biggs tree groups have been mentioned as a possible candidate for
constructing bounded diameter-to-girth ratio families.
We show in \fullref{not_dg_bounded} that this cannot work; the best
case girth estimate
$g\asymp\frac{\ln(|G_{C,R}|)}{\ln(\ln(|G_{C,R}|))}$ is too small, and
this forces the diameter-to-girth ratio to go to infinity.

\section{Primitive permutation groups containing a cycle}\label{sec:jones}
The \emph{degree} of a permutation group is the size of the set that
it permutes.
It is \emph{$m$--transitive} if it acts transitively on the set of distinct
$m$--tuples.
It is \emph{sharply $m$--transitive} if it is $m$--transitive and for
any given pair of $m$--tuples there is a unique element taking one
to the other.
It is \emph{primitive} if it does not preserve any
nontrivial partition of the set upon which it acts.

A 2--transitive permutation group of degree $n>2$ is primitive, since a nontrivial
partition of $\{1,\dots,n\}$ contains at least one non-singleton subset
$A\supset\{a_1,a_2\}$ and at least one other subset $B\supset\{b\}$, but 2--transitivity
gives a permutation $\sigma$ with $a_1.\sigma=a_1$ and $a_2.\sigma=b$.

\begin{theorem}[{Excerpt\footnote{The $k=1,2$ cases do not occur in
      our arguments, so we have omitted their subcases.} of \cite[Theorem~1.2]{Jon14}}]\label{jones}
  If $G$ is a primitive permutation group of degree $n$ containing a nontrivial cycle $g$ then either $G$ contains   $\mathrm{Alt}(n)$ or
  $g$ fixes $0\leq k\leq 2$ points and one of the following is true:
  \begin{enumerate}
  \item[(0)]$k=0$ and:
  \begin{enumerate}
  \item $C_p\leq G\leq \mathrm{AGL}_1(p)$ for $n=p$ prime.\label{item:affine}
    \item $\mathrm{PGL}_d(q)\leq G\leq
      \mathrm{P}\Gamma\mathrm{L}_d(q)$ with $d\geq 2$, $n=(q^d-1)/(q-1)$, and $q$ a
      prime power.\label{item:projective}
      \item $G$ is one of $L_2(11)$, $M_{11}$, or $M_{23}$ with $n=11, 11, 23$, respectively. \label{item:sporadic}
  \end{enumerate}
\item $k=1$
  \item $k=2$ 
  \end{enumerate}
\end{theorem}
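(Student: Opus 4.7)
The plan is to derive the statement from the Classification of Finite Simple Groups (CFSG) together with Burnside's classical theorem on primitive groups of prime degree. The hypothesis that $g$ is a nontrivial cycle fixing at most two points means $\langle g\rangle$ contains an $(n-k)$-cycle with $n-k\geq n-2$, i.e.\ a cyclic subgroup of $G$ acting nearly regularly on the permutation domain, and this near-regularity is the structural hook the classification machinery latches onto.

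I would first handle $k=0$, where $\langle g\rangle$ is a regular cyclic subgroup of order $n$. For prime $n=p$, Burnside's theorem says $G$ is either 2-transitive or contained in $\mathrm{AGL}_1(p)$, yielding (a). For composite $n$, Sylow/Cauchy considerations rule out affine-type primitive groups, and the O'Nan--Scott theorem reduces to the almost simple case. The CFSG-based classification of finite 2-transitive groups (by Curtis--Kantor--Seitz, Hering, Maillet, and others) then provides a short explicit list; inspecting which 2-transitive almost simple groups contain a regular $n$-cycle yields exactly the projective semi-linear groups of (b), with the $n$-cycle realized as a Singer cycle (possibly twisted by Frobenius), along with the three sporadic exceptions $L_2(11)$, $M_{11}$, $M_{23}$ in their exceptional 2-transitive actions of (c).

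For $k=1$ and $k=2$ I would repeat the argument in lower degree: after passing to the pointwise stabilizer of the $k$ fixed points one recovers a transitive group of degree $n-k$ containing a regular cycle. Primitivity of $G$ feeds the fixed-point set back in as a block-type constraint, and a second pass through CFSG produces the analogous (omitted) lists.

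The chief obstacle is the CFSG case-check itself. While the structural reductions---primitive plus long cycle forcing 2-transitivity outside tiny cases, O'Nan--Scott, the list of 2-transitive groups---are conceptually clean, determining in each family of almost simple 2-transitive groups precisely which conjugacy classes consist of cycles of prescribed length and fixed-point count requires concrete computations inside specific families of Lie-type and sporadic groups. It is here that the full strength of the classification is indispensable, and it is what keeps the resulting list as short as it is.
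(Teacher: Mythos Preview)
The paper does not prove this theorem. It is quoted verbatim, with attribution, from Jones \cite[Theorem~1.2]{Jon14} and used as a black box in \fullref{sec:main_theorem}; there is no proof in the paper to compare your proposal against.

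That said, your outline is broadly faithful to how Jones actually argues, which does go through CFSG and the classification of finite 2-transitive groups. A few of your reductions are slightly off: Burnside's theorem for prime degree $p$ only separates the solvable case (yielding~(a)) from the 2-transitive case, and the latter still needs CFSG to be resolved as $\alt(p)$, $\sym(p)$, or one of (b),~(c). For composite $n$, the step ``primitive with a regular cyclic subgroup implies 2-transitive'' is a classical theorem of Schur and Wielandt rather than a Sylow/Cauchy argument. For $k=1,2$, the passage to the stabilizer of the fixed points is legitimate because Jordan's classical results already force $G$ to be $(k+1)$-transitive once it is primitive and contains an $(n-k)$-cycle with $k\le 2$; this is what makes the induced action on the remaining $n-k$ points transitive. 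None of these are fatal gaps in your sketch, but since the present paper only cites the result, supplying a full proof would be out of scope here.
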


\section{Tracking the action}\label{sec:tracks}
By a \emph{word}  we mean either an empty string or a string of letters from $C$ such that no $c\in C$ appears twice in
succession. 

Define the \emph{height} of a vertex in $T_{C,R}$ as its distance from
the root $\bp$.
For $v\neq\bp$ there is a unique \emph{down edge} at $v$ connecting
$v$ to a vertex whose height is one less. For a vertex $v\neq\bp$ at
height less than $R$ there are $C-1$ \emph{up edges} connecting $v$
to a vertex whose height is one more.
The root has $C$ up edges. 
We augment $T_{C,R}$ as follows.
Each vertex $v$ at height $R$ has exactly one down edge, colored by
some $c_v\in C$. 
For each such vertex and each $c\in C\setminus\{c_v\}$, add a mirrored
half-edge colored $c$ to $T_{C,R}$ at $v$.
The effect is that we can specify any starting vertex $v$ and any
sequence of colors $c_{i_1}c_{i_2}\cdots c_{i_n}$, and there is a
unique path in $T_{C,R}$ starting at $v_0=v$ and such that $v_{j+1}$
is the unique vertex connected to $v_j$ by an edge colored $c_{i_j}$,
where we consider that a mirrored half-edge colored $c_{i_j}$ at $v_j$
leads to back to $v_j$.
The point of saying ``mirrored half-edge'' instead of adding a loop at
$v_j$ is to avoid any ambiguity about the direction of travel along
the loop, and also to make $T_{C,R}$ constant valence $C$. 

The right-action of $G_{C,R}$ on $T_{C,R}$ can now be simply
visualized as pushing along a path: the element of $G_{C,R}$
represented by a nonempty word
$w:=c_{i_1}\cdots c_{i_n}$ sends each vertex $v$ of $T_{C,R}$ to the
unique vertex at the end of the path starting at $v$ whose color
sequence is $c_{i_1}\cdots c_{i_n}$. 
We will refer to this path as the \emph{track of the action of $w$ on $v$}.
Since we are assuming words are freely reduced, the track of the
action of $w$ on $v$ is a path in $T_{C,R}$ that is locally injective
except when it reflects off a mirror. 

If  $\gamma$ is a geodesic edge path in $T_{C,R}$ from $v$ to $v'$ whose edges are
colored $c_{i_1}$, $c_{i_2}$, \dots,$c_{i_n}$ then for
$w:=c_{i_1}\cdots c_{i_n}$ the track of the action of $w$ on $v$ is
exactly the geodesic $\gamma$. 
This shows $G_{C,R}\act T_{C,R}$ is transitive; we get an
element taking any vertex $v$ to any other vertex $v'$ by pushing $v$
along the unique geodesic from $v$ to $v'$ in $T_{C,R}$.

For $c\in C$, define the \emph{peripheral subgroup} $P_{\hat{c}}$ of $G_{C,R}$ to be the subgroup
generated by all of the generators except $c$.
Define $\bdry T_{R,C}$ to be the vertices at height $R$, and let 
$\bdry_cT_{R,C}$  be the vertices of $\bdry T_{C,R}$ whose
incoming edge is colored $c$.

\begin{proposition}
  The stabilizer in $G_{C,R}$ of a vertex $v$ is generated by the
subgroups $\delta P_{\hat{c}}\bar\delta$ as $c$ ranges over $C$ and
$\delta$ ranges over geodesics in $T_{C,R}$ from $v$ to a vertex in $\bdry_c T_{C,R}$, and where $\bar\delta$ denotes the reverse of
the path $\delta$.
The shortest nonempty words representing elements of $\stab(v)$ have
length $1+2d(v,\bdry T_{C,R})$, which is the minimal distance from $v$
to a mirror and back. 
\end{proposition}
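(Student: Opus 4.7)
The plan is to establish the easier containment first and then peel reflections off the tracks to get the reverse.

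For the containment $\delta P_{\hat{c}}\bar\delta\subseteq\stab(v)$: every generator $c'\in C\setminus\{c\}$ acts on any boundary vertex $u\in\bdry_c T_{C,R}$ by mirror reflection, since the only non-mirrored half-edge at $u$ is its down edge (colored $c$). Hence every word in the generators of $P_{\hat{c}}$ fixes $u$ through repeated reflections, so $P_{\hat{c}}\subseteq\stab(u)$, and conjugating by any $\delta$ whose track on $v$ terminates at $u$ gives $\delta P_{\hat{c}}\bar\delta\subseteq\stab(v)$.

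For the reverse containment, I would induct on the length of a shortest reduced word $w=c_1\cdots c_n$ representing a given element $g\in\stab(v)$. The case $n=0$ is trivial. If $n\geq 1$, the closed walk $v_0,v_1,\dots,v_n$ traced by $w$ in $T_{C,R}$ cannot close up without at least one mirror reflection, since a reduced walk in a tree never returns to its starting vertex. Let $i$ be the index of the first reflection, so $v_{i-1}\in\bdry_c T_{C,R}$ for some $c$, and $c_i\neq c$. The reflectionless prefix $\delta:=c_1\cdots c_{i-1}$ is a geodesic from $v$ to $v_{i-1}$. The element $h:=\delta c_i\bar\delta$ lies in $\delta P_{\hat{c}}\bar\delta\subseteq\stab(v)$, and a direct computation gives
\[h^{-1}g = (\delta c_i\bar\delta)(\delta c_i c_{i+1}\cdots c_n) = \delta c_{i+1}\cdots c_n,\]
a word of length $n-1$ (possibly shorter after further reduction if $c_{i-1}=c_{i+1}$) representing an element of $\stab(v)$. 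Induction places $h^{-1}g$ in the subgroup generated by these conjugates, and so $g=h\cdot(h^{-1}g)$ lies there too.

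The length estimate falls out of the same analysis. Any nontrivial reduced word $w\in\stab(v)$ must contain at least one reflection; take the first, at index $i$. The reflectionless prefix forces $i-1=d(v,v_{i-1})\geq d(v,\bdry T_{C,R})$, and the remaining letters $c_{i+1}\cdots c_n$ trace a walk from the boundary vertex $v_{i-1}$ back to $v$, contributing at least another $d(v,\bdry T_{C,R})$ letters. Including the reflection step $c_i$, this yields $n\geq 2d(v,\bdry T_{C,R})+1$. The bound is attained by $\delta c'\bar\delta$, where $\delta$ is a geodesic of length $d(v,\bdry T_{C,R})$ from $v$ to a closest boundary vertex $u\in\bdry_c T_{C,R}$ and $c'$ is any color other than $c$; this word is reduced since the last letter of $\delta$ is $c$ and $c'\neq c$.

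The main potential obstacle is keeping the mirror/track bookkeeping straight, particularly verifying that the first reflection really does occur at a boundary vertex reached from $v$ by a geodesic, and that the cancellation in $h^{-1}g$ strictly shortens the word length. Both facts follow transparently from the definition of a track, so the argument should be essentially routine once the notation is in place.
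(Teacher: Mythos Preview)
Your proof is correct and close in spirit to the paper's, but the mechanics differ. The paper decomposes the entire track of $w$ at $v$ into maximal embedded segments $\alpha+\beta_0+\cdots+\beta_{n-1}+\gamma$ bounded by successive mirror hits, then handles the base case (no $\beta_i$, so $\gamma=\bar\alpha$ and $w\in\delta P_{\hat c}\bar\delta$ directly) and the inductive case by splitting $\beta_0$ at its vertex closest to $v$, peeling off $\alpha+\beta_0'$ as a conjugate of a parabolic, and iterating on the shorter track $\beta_0''+\beta_1+\cdots+\gamma$. Your induction on word length instead peels off a \emph{single} conjugated generator $h=\delta c_i\bar\delta$ at the first reflection and reduces $g$ to $h^{-1}g=\delta c_{i+1}\cdots c_n$ of length $n-1$. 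Both arguments terminate for the same reason (each step strictly shortens the word/track), and the length estimate is obtained identically in both. Your version is a little more economical in that it avoids the closest-point-to-$v$ splitting of the $\beta$ segments; the paper's version has the minor conceptual advantage that each piece it peels off is already a full conjugate of a peripheral element rather than just a conjugate of a single generator.
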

\begin{proof}
  Suppose $w$ is a nonempty word stabilizing $v\in T_{C,R}$.
  The track of its action is a nontrivial path that begins and ends at the same
  vertex $v$ and is locally injective except when it reflects off a
  mirror.
  Since $T_{C,R}$ is a tree, the path must hit some mirrors,
  or else it cannot return to $v$. Thus, the track decomposes as a
  concatenation of embedded segments
  $\alpha+\beta_0+\cdots+\beta_{n-1}+\gamma$ such that:
  \begin{itemize}
  \item $\alpha$ starts $v$ and ends at a mirror.
  \item $\gamma$ starts at a mirror and ends at $v$.
    \item If $\beta_i$ exists then it starts and ends at distinct mirrors. 
    \end{itemize}
    Suppose first that there are no segments $\beta_i$. 
    Then $\gamma=\bar\alpha$.
    Let $\delta$ be the initial subsegment of $\alpha$ that includes
    all of $\alpha$ except the final half-edge.
    The last edge of $\delta$ has some color $c$, and the last vertex $v'$
    is in $\bdry_{\hat{c}}T_{C,R}$, so $\alpha+\gamma$ follows a
    geodesic $\delta$ from $v$ to $v'$, reflects off a mirror at $v'$,
    and then travels along $\bar\delta$ back to $v$.
    This says $w\in \delta P_{\hat{c}}\hat\delta$.

    Now suppose there are some $\beta$ segments. 
 Consider the geodesic segment $\beta_0$.
 it contains a unique vertex $v'$ closest to $v$.
 Let $\beta_0'$ be the geodesic in $T_{C,R}$ consisting of the initial
 subsegment of $\beta_0$ until the point $v'$, followed by the
 geodesic from $v'$ to $v$. 
 Let $\beta_0''$ be the geodesic in $T_{C,R}$ that follows the geodesic
 from $v$ to $v'$ and then the terminal subsegment of $\beta_0$
 starting from $v'$.
 Then $\alpha+\beta_0'$ is as in the previous paragraph, so is
 represented by a conjugate of a parabolic.
 Iteratively repeat the argument for $\beta_0''+\beta_1+\cdots+\beta_{n-1}+\gamma$. 
This process realizes $w$ as the free reduction of a product of
conjugates of parabolics. 
\end{proof}
\begin{corollary}\label{alternating_palindromes_fix_root}
 In $G_{C,R}$, the stabilizer of the root vertex $\bp$ is generated by palindromes
 of length $2R+1$, and these are the shortest nonempty words
 representing elements of $\stab(\bp)$. 
\end{corollary}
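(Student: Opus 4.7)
The corollary is a specialization of the preceding proposition to the case $v=\bp$. The plan is to read off the length claim immediately from the proposition and then verify that the stated generating set can be taken to consist of palindromes.

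First, since every vertex of $\bdry T_{C,R}$ is at distance $R$ from $\bp$, the proposition gives that the shortest nonempty words in $\stab(\bp)$ have length $1+2R=2R+1$. This disposes of the length assertion.

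Second, the proposition says $\stab(\bp)$ is generated by the conjugate subgroups $\delta P_{\hat c}\bar\delta$, where $\delta$ ranges over the length-$R$ geodesics from $\bp$ to vertices in $\bdry_c T_{C,R}$ and $c$ ranges over $C$. Since $P_{\hat c}$ is generated by $C\setminus\{c\}$, the subgroup $\delta P_{\hat c}\bar\delta$ is itself generated by the conjugates $\delta c'\bar\delta$ with $c'\in C\setminus\{c\}$. So it is enough to show that every such $\delta c'\bar\delta$ is represented by a palindromic word of length $2R+1$.

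Third, I write out the word explicitly. By definition of $\bdry_c T_{C,R}$, the last edge of $\delta$ has color $c$, so the color sequence of $\delta$ is $a_1 a_2\cdots a_{R-1}c$ for some colors $a_i$. The element $\delta c'\bar\delta$ is then represented by
\[
a_1 a_2\cdots a_{R-1}\,c\,c'\,c\,a_{R-1}\cdots a_2 a_1.
\]
Because $c'\neq c$, the junctions $c\,c'$ and $c'\,c$ are reduced, and $\delta$ itself is reduced as the color sequence of a geodesic, so this is a freely reduced word of length $2R+1$. It is visibly invariant under reversal, hence a palindrome. Combining these three observations gives the corollary; I expect no real obstacle, as everything is a direct unpacking of the proposition together with the fact that any geodesic from $\bp$ to $\bdry_c T_{C,R}$ terminates in an edge colored $c$.
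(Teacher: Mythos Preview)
Your proof is correct and follows the same route as the paper's: both deduce the length claim directly from $d(\bp,\bdry T_{C,R})=R$ and both observe that each generator $\delta c'\bar\delta$ is a geodesic to the boundary, a single reflection letter, and the reversed geodesic back, giving a palindrome of length $2R+1$. Your version is simply more explicit in writing out the word $a_1\cdots a_{R-1}\,c\,c'\,c\,a_{R-1}\cdots a_1$ and in noting that $P_{\hat c}$ being generated by single colors is what lets you pass from the conjugate subgroups to individual palindromic generators.
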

\begin{proof}
  The previous proposition shows that $\stab(\bp)$ is generated by
  elements that can be expressed as a word that travels along a
  geodesic from $\bp$ to some vertex on $\bdry T_{R,C}$, which is
  distance $R$ away, then uses one letter to reflect off a mirror, and
  then travels the original path back to $\bp$. Such a word has length
  $2R+1$ and reads the same forward as backward.
\end{proof}
This implies Biggs's girth bound:
\begin{corollary}
 $\girth(\cay(G_{C,R},C))\geq 2R+1$
\end{corollary}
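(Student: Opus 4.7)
The plan is to deduce this immediately from \fullref{alternating_palindromes_fix_root}. The girth of $\cay(G_{C,R},C)$ is, by definition, the length of the shortest cycle in the graph. Because the generators in $C$ are involutions and our words are required to be freely reduced (no letter of $C$ appearing twice in succession), a cycle based at the identity vertex corresponds to a nonempty reduced word $w=c_{i_1}\cdots c_{i_n}$ in the generators with $w=1$ in $G_{C,R}$, and the length of this cycle is $n$. The girth is therefore the minimal length of such a word.

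The key observation is that the identity element of $G_{C,R}$ fixes every vertex of $T_{C,R}$, in particular the root $\bp$. So any nonempty reduced word representing $1$ is, a fortiori, a nonempty reduced word representing an element of $\stab(\bp)$. By \fullref{alternating_palindromes_fix_root}, every such word has length at least $2R+1$. Hence the girth is at least $2R+1$.

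There is no serious obstacle here: the previous corollary already isolates the hard content (analyzing tracks to see that returning to $\bp$ requires reflecting off a mirror at distance $R$ and coming back). The only thing to be mindful of is that cycles in the Cayley graph correspond to freely reduced words, which matches our standing convention on words, so the hypothesis of the corollary applies without modification.
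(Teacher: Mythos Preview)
Your argument is correct and is exactly the intended one: the paper simply writes ``This implies Biggs's girth bound'' immediately after \fullref{alternating_palindromes_fix_root}, leaving the reader to fill in precisely the observation you spelled out, that a nonempty reduced word representing the identity is in particular a nonempty reduced word representing an element of $\stab(\bp)$ and hence has length at least $2R+1$.
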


\begin{proposition}\label{rainbow_cycle}
  If $C=\{c_1,\dots,c_{|C|}\}$ then 
  $w:=\prod_{i=1}^{|C|}c_i$ is an $N_{C,R}$--cycle.
\end{proposition}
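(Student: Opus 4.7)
The goal is to show $w = c_1 \cdots c_C$ acts as a single $N_{C,R}$-cycle on $V(T_{C,R})$. Since $\langle w \rangle$ is cyclic, every orbit has size $|w|$, so it suffices to prove that $\langle w \rangle$ acts transitively on the $N_{C,R}$ vertices: the unique orbit then has size $N_{C,R}$ and $w$ is a single cycle of that length. I plan to prove transitivity by induction on $R$, using the track formalism of \fullref{sec:tracks} to chase the colors $c_1, \ldots, c_C$ through the tree.

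For the base case $R = 1$, a direct track computation gives $\bp \mapsto v_{c_1}$ (the first letter goes up, the remaining $C - 1$ letters all bounce off mirrored half-edges at $v_{c_1}$, since the down-edge color there is $c_1$); $v_{c_i} \mapsto v_{c_{i+1}}$ for $1 \leq i < C$ (the first $i - 1$ letters bounce off mirrors, $c_i$ descends to $\bp$, $c_{i+1}$ ascends to $v_{c_{i+1}}$, and the last $C - i - 1$ letters bounce off mirrors); and $v_{c_C} \mapsto \bp$ (the first $C - 1$ letters are mirrors, $c_C$ descends). So $w = (\bp,\, v_{c_1},\, v_{c_2},\, \ldots,\, v_{c_C})$, a single cycle of length $C + 1 = N_{C,1}$.

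For the inductive step $R \geq 2$, I would decompose $V(T_{C,R}) = \{\bp\} \sqcup T^{(c_1)} \sqcup \cdots \sqcup T^{(c_C)}$, where $T^{(c)}$ consists of the vertices whose geodesic from $\bp$ begins with color $c$, each having cardinality $\frac{(C-1)^R - 1}{C - 2}$. The $R = 2, C = 3$ computation produces the explicit cycle $\bp, 12, 13, 1, 23, 2, 21, 3, 31, 32$, suggesting the general pattern: the $\langle w \rangle$-orbit of $\bp$ first visits $\bp$, then traverses all of $T^{(c_1)}$ in Hamiltonian fashion, then all of $T^{(c_2)}$, and so on through $T^{(c_C)}$, before returning to $\bp$. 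Summing sizes would give the orbit length $1 + C \cdot \frac{(C-1)^R - 1}{C - 2} = N_{C,R}$, yielding transitivity.

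The main obstacle is that each $T^{(c_i)}$ is \emph{not} itself a Biggs tree: its root $v_{c_i}$ has a single outgoing edge to $\bp$ and only $C - 1$ up-edges, so the inductive hypothesis for $R - 1$ cannot be invoked verbatim. To bridge this, I would analyze how $w$ (or a cyclically rotated word $c_{j+1} \cdots c_C c_1 \cdots c_j$, depending on the ``phase'' of the orbit) acts inside the asymmetrically rooted subtree $T^{(c_i)}$, verifying two things: that the track of $w$ on any vertex $v \in T^{(c_i)}$ stays inside $T^{(c_i)}$ unless $v$ is the designated ``exit'' vertex of the subtree, in which case the track crosses $\bp$ into $T^{(c_{i+1})}$; and that the within-subtree motion is Hamiltonian, which can be reduced to a cycle-count on a smaller tree with one marked ``external'' edge. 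This bookkeeping on entry/exit points and intra-subtree traversals is where the principal combinatorial work will lie.
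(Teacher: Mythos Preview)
Your inductive strategy is plausible and your $R=1$ and $R=2$, $C=3$ computations are correct, but the paper takes a quite different and much shorter route that sidesteps exactly the bookkeeping you flag as ``the principal combinatorial work.'' The paper embeds $T_{C,R}$ in the plane so that at every vertex the incident (half)edges appear in clockwise order $c_1,\dots,c_{|C|}$; then the track of $w^{N_{C,R}}$ starting at $\bp$ is simply the boundary curve of a tubular neighborhood of the embedded tree. That curve visits each vertex exactly $|C|$ times, once for each clockwise transition $c_i\to c_{i+1}$ (indices mod $|C|$), but only the transition $c_{|C|}\to c_1$ at a vertex $v$ corresponds to a full power of $w$ landing at $v$; hence $n\mapsto\bp.w^n$ is a bijection from $\{1,\dots,N_{C,R}\}$ onto the vertex set, and $w$ is a single $N_{C,R}$--cycle. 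Your induction would instead require proving a Hamiltonian-traversal lemma for each asymmetric subtree $T^{(c_i)}$ with prescribed entry and exit vertices and verifying that these traversals glue cyclically through $\bp$; this can certainly be done, but the cleanest way to carry it out is essentially to rediscover the tubular-neighborhood picture restricted to each branch, so the paper's global argument is strictly more economical. One small slip: ``every orbit has size $|w|$'' should read ``divides $|w|$,'' though your actual reduction (transitivity of $\langle w\rangle$ forces a single cycle) is sound.
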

\begin{proof}
  $T_{C,R}$ can be drawn in the plane in such a way that the (half)edges
      incident to each vertex are colored $c_1$, $c_2$, \dots, $c_{|C|}$,
      when read clockwise starting from the $c_1$ edge.
      Then the track of the $w^{N_{C,R}}$ action on $\bp$ can be imagined
      as following the boundary of a tubular neighborhood of the embedding of $T_{R,C}$ into the plane, starting
      and ending at $\bp$. See \fullref{fig:rainbow_cycle}.

      \begin{figure}[h]
        \centering
        \begin{tikzpicture}\tiny
          \coordinate (o) at (0,0);
          \coordinate (r) at (0:1);
          \coordinate (g) at (240:1);
          \coordinate (b) at (120:1);
          \coordinate (rb) at (-25:2);
          \coordinate (rg) at (25:2);
          \coordinate (gb) at (240+25:2);
          \coordinate (gr) at (240-25:2);
          \coordinate (bg) at (120-25:2);
          \coordinate (br) at (120+25:2);
          \coordinate (rgr) at (25-15:2.5);
          \coordinate (rgb) at (25+15:2.5);
          \coordinate (rbg) at (-25-15:2.5);
          \coordinate (rbr) at (-25+15:2.5);
          \coordinate (gbg) at (240+25-15:2.5);
          \coordinate (gbr) at (240+25+15:2.5);
          \coordinate (grb) at (240-25-15:2.5);
          \coordinate (grg) at (240-25+15:2.5);
          \coordinate (bgr) at (120-25-15:2.5);
          \coordinate (bgb) at (120-25+15:2.5);
          \coordinate (brb) at (120+25-15:2.5);
          \coordinate (brg) at (120+25+15:2.5);
          \draw[ultra thick, red] (o)--(r) (g)--(gr) (b)--(br);
          \draw[ultra thick, green] (o)--(g) (r)--(rg) (b)--(bg);
          \draw[ultra thick, blue] (o)--(b) (r)--(rb) (g)--(gb);
          \draw[ultra thick, red, -|] (rg)--(rgr);
          \draw[ultra thick, red, -|] (rb)--(rbr);
          \draw[ultra thick, green, -|] (rb)--(rbg);
          \draw[ultra thick, blue, -|] (rg)--(rgb);
          \draw[ultra thick, green, -|] (gb)--(gbg);
          \draw[ultra thick, red, -|] (gb)--(gbr);
          \draw[ultra thick, green, -|] (gr)--(grg);
          \draw[ultra thick, blue, -|] (gr)--(grb);
          \draw[ultra thick, blue, -|] (bg)--(bgb);
          \draw[ultra thick, red, -|] (bg)--(bgr);
          \draw[ultra thick, green, -|] (br)--(brg);
          \draw[ultra thick, blue, -|] (br)--(brb);
          \filldraw (o) circle (2pt) (r) circle (2pt) (g) circle (2pt)
          (b) circle (2pt) (rg) circle (2pt) (rb) circle (2pt) (gr)
          circle (2pt) (gb) circle (2pt) (br) circle (2pt) (bg) circle
          (2pt);
          \draw[red,>-] ($(o)+(60:.2)$)--($(r)+(120:.2)$) node[at
          start, xshift=3pt, above] {0};
          \draw[green] ($(r)+(120:.2)$)--($(rg)+(150:.2)$);
          \draw[blue] ($(rg)+(150:.2)$)--(rgb)--($(rg)+(30:.2)$);
          \draw[red] ($(rg)+(30:.2)$)--(rgr) node[at start, right]
          {1} (rgr)--($(rg)+(-90:.2)$);
          \draw[green] ($(rg)+(-90:.2)$)--($(r)+(0:.2)$);
          \draw[blue] ($(r)+(0:.2)$)--($(rb)+(90:.2)$);
          \draw[red] ($(rb)+(90:.2)$)--(rbr) node[at start, above]
          {2} (rbr)--($(rb)+(-30:.2)$);
          \draw[green] ($(rb)+(-30:.2)$)--(rbg)--($(rb)+(210:.2)$);
          \draw[blue] ($(rb)+(210:.2)$)--($(r)+(240:.2)$);
          \draw[red] ($(r)+(240:.2)$)--($(o)+(-60:.2)$) node[at start, below]
          {3};
          \draw[green] ($(o)+(-60:.2)$)--($(g)+(0:.2)$);
          \draw[blue] ($(g)+(0:.2)$)--($(gb)+(30:.2)$);
          \draw[red] ($(gb)+(30:.2)$)--(gbr) node[at start, right]
          {4} (gbr)--($(gb)+(-90:.2)$);
          \draw[green] ($(gb)+(-90:.2)$)--(gbg)--($(gb)+(150:.2)$);
          \draw[blue] ($(gb)+(150:.2)$)--($(g)+(240:.2)$);
          \draw[red] ($(g)+(240:.2)$)--($(gr)+(330:.2)$) node[pos=0,xshift=-2pt, below]
          {5};
          \draw[green] ($(gr)+(330:.2)$)--(grg)--($(gr)+(200:.2)$);
          \draw[blue] ($(gr)+(200:.2)$)--(grb)--($(gr)+(90:.2)$);
          \draw[red] ($(gr)+(90:.2)$)--($(g)+(120:.2)$) node[at start, above]
          {6};
          \draw[green] ($(g)+(120:.2)$)--($(o)+(180:.2)$);
          \draw[blue] ($(o)+(180:.2)$)--($(b)+(240:.2)$);
          \draw[red] ($(b)+(240:.2)$)--($(br)+(-90:.2)$) node[at start, xshift=-2pt,below]
          {7};
          \draw[green] ($(br)+(-90:.2)$)--(brg)--($(br)+(150:.2)$);
          \draw[blue] ($(br)+(150:.2)$)--(brb)--($(br)+(30:.2)$);
          \draw[red] ($(br)+(30:.2)$)--($(b)+(120:.2)$) node[at start, above,xshift=3pt]
          {8};
          \draw[green] ($(b)+(120:.2)$)--($(bg)+(210:.2)$);
          \draw[blue] ($(bg)+(210:.2)$)--(bgb)--($(bg)+(90:.2)$);
          \draw[red] ($(bg)+(90:.2)$)--(bgr) node[at start, above]
          {9} (bgr)--($(bg)+(-30:.2)$);
          \draw[green] ($(bg)+(-30:.2)$)--($(b)+(0:.2)$);
          \draw[blue,->] ($(b)+(0:.2)$)--($(o)+(60:.2)$);
        \end{tikzpicture}
        \caption{$T_{3,2}$ with the track of 
          $(rgb)^{10}$ at $\bp$, showing that $rgb$ is a 10--cycle.
        The vertex is numbered $n$ when that vertex is $\bp.(rgb)^n$.}
        \label{fig:rainbow_cycle}
      \end{figure}
      
      The track hits each vertex $v$ of $T_{C,R}\setminus\{\bp\}$
      $C$--many times, once for each possible transition $c_i$ to
      $c_{i+1}$ or $c_{C}$ to $c_1$.
      However, only the $c_{C}$ to $c_1$ transition at $v$ corresponds to a power of $w$ taking
      $\bp$ to $v$, so $n\mapsto \bp.w^n$ gives a bijection between
      numbers $1,\dots, N_{C,R}$ and vertices of $T_{C,R}$. 
    \end{proof}
    This implies $\girth(\cay(G_{C,R},C))\leq
    CN_{C_R}$, but we will soon improve this bound.

For any subset $C'\subset C$ the tree $T_{C,R}$ decomposes into a
disjoint union of maximal $C'$--colored subtrees, where each component
is a copy of $T_{C',r}$ for some $r\leq R$.
The subgroup $G'$ of $G_{C,R}$ generated by $C'$ preserves each such
$C'$--colored component and maps onto the corresponding $G_{C',r}$,
simply by tracking the action of words on the component.
In general this is a proper quotient, since the $G'$ is simultaneously
acting on several different $C'$--colored components, but this
observation allows us to describe the cycle structure of elements of
$G'$ in terms of induced actions on the $C'$--colored components.
When $C'=2$ we will also call a $C'$--colored component a
\emph{dichrome arc}. In this case it is an arc with both ends being
mirrors of $T_{C,R}$.

\begin{proposition}\label{dichrome_is_dihedral}
  $G_{2,R}$ is a dihedral group of order $2(2R+1)$ and
  $\stab(\bp)\cong\mathbb{Z}/2\mathbb{Z}$. 
  The alternating words $c_1c_2\dots$ and $c_2c_1\dots$ of length
  $2R+1$ represent the same element of $G_{2,R}$ and they act by
  fixing $\bp$ and exchanging the two halves of $T_{2,R}\setminus\{\bp\}$,  preserving
  distance to $\bp$.
\end{proposition}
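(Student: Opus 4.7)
The plan is to exploit the fact that for $C=2$ the tree $T_{2,R}$ is just a path. First I would set up coordinates: label the vertices $0,1,\dots,2R$ with $\bp=R$, and color the edge between $i$ and $i{+}1$ by $c_1$ or $c_2$ according to the parity of $i$; this is a proper coloring, and the two edges at $\bp$ have different colors as required. The augmented tree has one mirrored half-edge at each of the leaves $0$ and $2R$, colored by whichever color does not appear on the adjacent edge. Under this identification, $c_1$ is a product of disjoint transpositions swapping the endpoints of the $c_1$-edges and fixing the leaf with the $c_1$-mirror, and symmetrically for $c_2$.

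Next I would compute $c_1 c_2$ as a permutation of $\{0,\dots,2R\}$ directly from the picture. On the interior of the path, $c_1 c_2$ behaves like a ``shift by $2$'', with the leaves $0$ and $2R$ producing the necessary wrap-around via the two mirrors; tracing the orbit of a single vertex shows that every vertex is visited exactly once, so $c_1 c_2$ is a single $(2R{+}1)$-cycle. This immediately yields $|\langle c_1 c_2\rangle| = 2R+1 = N_{2,R}$. Since $c_1\neq c_2$ are distinct involutions with $c_1 (c_1 c_2) c_1 = (c_1 c_2)^{-1}$, the group $G_{2,R}=\langle c_1,c_2\rangle$ is a dihedral group of order at most $2(2R+1)$; the computed $(2R{+}1)$-cycle and the involution $c_1\notin\langle c_1c_2\rangle$ together exhibit exactly that many distinct permutations, so equality holds.

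For the remaining claims, note that the two alternating words of length $2R{+}1$ are $w_{12} := (c_1c_2)^R c_1$ and $w_{21}:=(c_2c_1)^R c_2$. Their equality in $G_{2,R}$ is the identity $(c_1c_2)^{2R+1}=1$ rewritten, which we have just established. By \fullref{alternating_palindromes_fix_root} the element $\sigma:=w_{12}=w_{21}$ fixes $\bp$; in fact, in the $C=2$ setting \emph{every} freely reduced palindrome of length $2R{+}1$ is one of these two alternating words, so $\stab(\bp)$ is generated by the single involution $\sigma$, giving $\stab(\bp)\cong \mathbb{Z}/2\mathbb{Z}$. Finally, to see that $\sigma$ acts as the order-$2$ reflection sending $i\mapsto 2R-i$, one reads off the action directly from the $(2R{+}1)$-cycle structure of $c_1c_2$: applying $c_1$ after $(c_1c_2)^R$ gives a reflection of the cyclic $\langle c_1c_2\rangle$-orbit, and since that orbit traverses the path $0,\dots,2R$ with $\bp=R$ the median position, the induced involution on the path fixes $\bp$ and swaps the two halves isometrically with respect to distance to $\bp$.

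The only even mildly delicate step is verifying that $c_1c_2$ really is a single $(2R{+}1)$-cycle rather than decomposing at the leaves, but this is a short bookkeeping check using the mirrored half-edges; everything else is a formal consequence of the dihedral presentation.
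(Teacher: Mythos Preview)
Your argument is correct and follows essentially the same route as the paper: both show that $c_1c_2$ is a $(2R{+}1)$--cycle, deduce the dihedral structure from $c_1(c_1c_2)c_1=(c_1c_2)^{-1}$, identify $\stab(\bp)$ via \fullref{alternating_palindromes_fix_root} as generated by the two alternating palindromes of length $2R{+}1$, and then read off the reflection action. The only cosmetic difference is that you verify the cycle claim by a direct coordinate computation on the path, whereas the paper simply invokes \fullref{rainbow_cycle}; and for the final reflection claim the paper argues abstractly via $w(c_1c_2)w=(c_1c_2)^{-1}$ and the color-swapping symmetry rather than your explicit $i\mapsto 2R-i$ formula.
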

\begin{proof}
  By \fullref{rainbow_cycle}, the element $c_1c_2\in G_{2,R}$ acts as a $(2R+1)$--cycle on the
  $2R+1$ vertices of the line graph $T_{2,R}$, and
  $c_1(c_1c_2)c_1=c_2c_1=(c_1c_2)^{-1}$. 
 The peripheral subgroups are isomorphic to $\mathbb{Z}/2\mathbb{Z}$,
 and $\stab(\bp)$ is generated by alternating words $w=c_1c_2\cdots$ and $w'=c_2c_1\cdots$ of length $2R+1$. 
 But $ww'=(c_1c_2)^{2R+1}=1$, and $w^2=(w')^2=1$, so
 $\stab(\bp)=\langle w\rangle\cong\mathbb{Z}/2\mathbb{Z}$. 
 Finally, $w$ is an involution in a centerless dihedral group, so
 $w(c_1c_2)w=(c_1c_2)^{-1}$, so $\bp.(c_1c_2)^nw=\bp.w(c_2c_1)^n=\bp.(c_2c_1)^n$.
 The distance claim follows from the symmetry exchanging the 
 colors. 
\end{proof}
\begin{corollary}\label{cor:dihedral_prime}
  If $2n+1$ is prime relative to $2R+1$ then the alternating words
  $x:=c_1c_2\cdots$ and $y:=c_2c_1\cdots$ of length $2n+1$ generate $G_{2,R}$.
\end{corollary}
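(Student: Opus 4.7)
The plan is to use the dihedral structure of $G_{2,R}$ established in the preceding proposition, wherein the rotation subgroup of order $2R+1$ is generated by $c_1c_2$, and $c_1,c_2$ are reflections. I expect no serious obstacle: the argument reduces to a short computation of $xy$ inside this dihedral group.

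First, I would observe that both $x$ and $y$ are reflections. Indeed, $x=(c_1c_2)^n c_1$ is a rotation times a reflection, hence a reflection; similarly $y=(c_2c_1)^n c_2$. So $\langle x,y\rangle$ is generated by two reflections, and its rotation subgroup is cyclic, generated by $xy$.

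Next, I would compute $xy$ directly. Using the elementary identity $c_1(c_2c_1)^n=(c_1c_2)^nc_1$ (both sides are the alternating word of length $2n+1$ starting with $c_1$), we get
\[
xy = (c_1c_2)^n c_1 (c_2c_1)^n c_2 = (c_1c_2)^n\cdot(c_1c_2)^n c_1 \cdot c_2 = (c_1c_2)^{2n+1}.
\]
Since $c_1c_2$ has order $2R+1$ and $\gcd(2n+1,2R+1)=1$ by hypothesis, $xy=(c_1c_2)^{2n+1}$ still has order $2R+1$, so it generates the entire rotation subgroup of $G_{2,R}$.

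Finally, $\langle x,y\rangle$ contains the full rotation subgroup together with the reflection $x$, and every element of a dihedral group is either a rotation or a rotation times any fixed reflection. Thus $\langle x,y\rangle=G_{2,R}$, completing the proof.
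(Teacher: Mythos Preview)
Your proof is correct and follows essentially the same approach as the paper: compute $xy=(c_1c_2)^{2n+1}$, observe it has order $2R+1$ by coprimality, and conclude that the involution $x$ together with a generator of the rotation subgroup gives all of the dihedral group. The paper is slightly terser (it does not spell out the identity $c_1(c_2c_1)^n=(c_1c_2)^nc_1$ or why $x$ is an involution), but the argument is identical.
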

\begin{proof}
  Since $c_1c_2$ has order $2R+1$ in $G_{2,R}$ and $2n+1$ is prime
  relative to $2R+1$, the element  $xy=(c_1c_2)^{2n+1}$ also has order
  $2R+1$.
  Since $G_{2,R}$ is dihedral of order $2(2R+1)$, it is generated by
  any involution and element of order $2R+1$.
  Thus, $\langle x,y\rangle=\langle x,xy\rangle=G_{2,R}$.
\end{proof}

\begin{proposition}\label{prop:rotation}
For  $w:=\prod_{i=1}^{C}c_i$, let $w^{N_{C,R}/C}$ denote the
initial subword of length $N_{C,R}$ of $w^{N_{C,R}}$.
Then $w^{N_{C,R}/C}$ acts as a rotation on $T_{C,R}$ that fixes
$\bp$ and takes edges of color $c_i$ to edges of color $c_{i+1}$. 
\end{proposition}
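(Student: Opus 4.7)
The plan is to exploit the planar embedding and boundary walk interpretation from the proof of \fullref{rainbow_cycle}, combined with the $C$-fold rotational symmetry around $\bp$.

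Fix a planar embedding of $T_{C,R}$ such that the (half-)edges at every vertex appear in clockwise cyclic order $c_1,c_2,\ldots,c_C$. Let $\rho$ denote the planar rotation by $2\pi/C$ centered at $\bp$. Then $\rho$ is an automorphism of the colored tree: it fixes $\bp$ and sends each $c_i$-edge to the $c_{i+1}$-edge at the same basepoint. The proposition's rotation is just the action of $\rho$ on vertices. Write $\sigma:=w^{N_{C,R}/C}$ and denote by $v_0,v_1,\ldots,v_{C N_{C,R}}$ the vertex sequence of the boundary walk (the track of $w^{N_{C,R}}$ at $\bp$), so that $v_0=v_{C N_{C,R}}=\bp$.

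The core of the proof is the equivariance statement
\[
\rho(v_k) = v_{k + N_{C,R}} \qquad \text{for all } k \pmod{C N_{C,R}}.
\]
Given this, taking $k=0$ yields $\bp.\sigma = v_{N_{C,R}} = \rho(\bp) = \bp$, so $\sigma$ fixes $\bp$. For an arbitrary vertex $v$, \fullref{rainbow_cycle} produces a unique $m$ with $v = \bp.w^m = v_{mC}$. The track of $\sigma$ starting at $v$ continues the cyclic boundary walk from position $mC$ for $N_{C,R}$ more steps, so $v.\sigma = v_{mC + N_{C,R}} = \rho(v_{mC}) = \rho(v)$. Hence $\sigma$ acts as the rotation $\rho$.

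To establish the equivariance, I would argue that $\rho$ preserves a tubular neighborhood of $T_{C,R}$ and orients its boundary, so $\rho$ acts on the parametrized boundary cycle as an orientation-preserving shift $k \mapsto k + s$ on $\mathbb{Z}/C N_{C,R}$. Comparing the color of the edge traversed at position $k$ with that at position $k+s$ (recalling that $\rho$ shifts colors by $+1$) forces $s \equiv 1 \pmod{C}$. Since $\rho$ has order $C$ the shift must also have order $C$, hence $\gcd(s, C N_{C,R})=N_{C,R}$. Combined with the elementary congruence $N_{C,R} \equiv 1 \pmod{C}$---immediate from $N_{C,R}-1 = \sum_{h=1}^{R} C(C-1)^{h-1}$---the only valid choice is $s = N_{C,R}$.

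The main obstacle is framing this shift argument precisely: one must distinguish carefully between the boundary walk as a sequence of directed edges and its underlying closed curve in the plane, and observe that the $C$-fold planar symmetry forces $\rho$ to act by a cyclic shift on the discrete parametrization. Once the shift is pinned down, the rest is bookkeeping modulo $C$ together with the bijection from \fullref{rainbow_cycle}.
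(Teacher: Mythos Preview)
Your proof is correct and follows essentially the same idea as the paper's: both exploit the planar embedding from \fullref{rainbow_cycle} and the $C$--fold rotational symmetry about $\bp$ to see that advancing $N_{C,R}$ steps along the length--$CN_{C,R}$ boundary track is exactly a $1/C$ turn. The paper compresses this into a single sentence, while you spell out the equivariance $\rho(v_k)=v_{k+N_{C,R}}$ and pin down the shift via the congruence $N_{C,R}\equiv 1\pmod C$ and the order of $\rho$; this is a legitimate and more careful unpacking of the same picture.
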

\begin{proof}
  For the planar embedding of $T_{C,R}$ as in
  \fullref{rainbow_cycle}, the word $w^{N_{C,R}/C}$ pushes every vertex of
  $T_{C,R}$ exactly $1/C$ of the way around the track of
  $w^{N_{C,R}}$. 
\end{proof}
\begin{corollary}\label{girth_upper_bound}
  $\girth(\mathrm{Cay}(G_{C,R},C))\leq 2N_{C,R}$
\end{corollary}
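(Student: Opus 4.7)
The plan is to exhibit a freely reduced nontrivial word of length $2N_{C,R}$ representing the identity in $G_{C,R}$. Such a word traces a closed walk in the simple Cayley graph $\cay(G_{C,R},C)$ with no immediate backtracks, and any such walk must contain a cycle of length at most $2N_{C,R}$, giving the desired girth bound.

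The word will be built by combining the rotation from \fullref{prop:rotation} with a second length-$N_{C,R}$ word representing its inverse. First I would take $u := w^{N_{C,R}/C}$ as in \fullref{prop:rotation}, a word of length $N_{C,R}$ representing the rotation $\rho$ of order $C$. Then, applying \fullref{rainbow_cycle} and \fullref{prop:rotation} to the reversed color order $w' := c_C c_{C-1} \cdots c_1$, the analogous initial subword $v$ of length $N_{C,R}$ of $(w')^{N_{C,R}}$ represents the rotation in the opposite direction, namely $\rho^{-1}$ (the geometric picture is the same boundary walk traversed counterclockwise rather than clockwise). Hence $uv$ has length $2N_{C,R}$ and represents $\rho\rho^{-1}=e$ in $G_{C,R}$.

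The heart of the argument is to verify that $uv$ is freely reduced. Within $u$ and within $v$, consecutive letters cycle through $c_1, c_2, \ldots, c_C$ (respectively, in reverse), so there is no internal repetition. For the junction between $u$ and $v$ I would use the identity $N_{C,R} = 1 + \tfrac{C((C-1)^R - 1)}{C - 2}$ together with the congruence $(C-1)^R \equiv 1 \pmod{C-2}$ (since $C-1\equiv 1\pmod{C-2}$) to deduce $N_{C,R} \equiv 1 \pmod C$. Consequently the last letter of $u$ is $c_1$ while the first letter of $v$ is $c_C$, and these differ for any $C \geq 2$.

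I expect this modest number-theoretic check to be the only real subtlety: without $N_{C,R} \equiv 1 \pmod C$ the word $uv$ could cancel at the seam and fail to yield a relator of length exactly $2N_{C,R}$, which would force either a further reduction analysis or a modification of the construction. The final step---that a freely reduced closed walk of length $\ell$ in a simple graph contains a cycle of length at most $\ell$---follows by iteratively extracting shorter closed sub-walks between repeated vertices, a routine argument made possible by the absence of backtracks and the simplicity of the graph.
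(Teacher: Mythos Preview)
Your proposal is correct and follows essentially the same route as the paper: concatenate the length--$N_{C,R}$ ``rotation'' word for the color order $c_1,\dots,c_C$ with the analogous word for the reversed order $c_C,\dots,c_1$, and check that the junction is reduced. The paper argues the junction reduction more tersely, observing only that $N_{C,R}/C$ is not an integer so $u$ does not end in $c_C$; your explicit computation $N_{C,R}\equiv 1\pmod C$ is a sharper version of the same fact, and your closing remark about extracting a cycle from a backtrack-free closed walk makes explicit a step the paper leaves implicit.
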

\begin{proof}
  By \fullref{prop:rotation}, $(\prod_{i=1}^{C}c_i)^{N_{C,R}/C}$
  is a rotation of $T_{C,R}$. By the same argument, 
  $(\prod_{i=1}^{C}c_{C+1-i})^{N_{C,R}/C}$
  is the opposite rotation.
  Since $N_{C,R}/C$ is not an integer, the former does not end with
  $c_{C}$, so their product is a freely reduced word of length
  $2N_{C,R}$ representing the
  identity permutation. 
\end{proof}

\begin{definition}
  Define some least common multiple functions:
  \[\lcmleq\from\mathbb{N}\to\mathbb{N}:n\mapsto\lcm\{m\mid 1\leq
    m\leq n\}\]
  \[\oddlcmleq\from\mathbb{N}\to\mathbb{N}:n\mapsto\lcm\{2m+1\mid
    1\leq 2m+1\leq n\}\]
\end{definition}

\begin{proposition}\label{lambda_girth}
  If $C=2$ or $R=1$ then 
  $\girth(\cay(G_{C,R},C))=4R+2$.
  Otherwise, $4R+3\leq \girth(\cay(G_{C,R},C))\leq 2\oddlcmleq(2R+1)$.
\end{proposition}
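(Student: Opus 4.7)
The plan is to handle the upper and lower bounds separately; the main effort concerns the lower bound in the case $C\geq 3$, $R\geq 2$.

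For the upper bound, I will decompose $T_{C,R}$ into $\{c_1,c_2\}$-dichrome arcs and compute the order of $c_1c_2$ in $G_{C,R}$. By \fullref{dichrome_is_dihedral} the subgroup $\langle c_1,c_2\rangle$ acts dihedrally on each arc, with $c_1c_2$ restricting to a cyclic rotation of length equal to the vertex count of the arc. Enumerating dichrome arcs by the height of their $\bp$-closest vertex shows that for $C\geq 3$ every odd integer $2r+1$ with $0\leq r\leq R$ is realized as an arc size, so $\ord(c_1c_2)=\oddlcmleq(2R+1)$ and $(c_1c_2)^{\oddlcmleq(2R+1)}$ is a freely reduced relation of length $2\oddlcmleq(2R+1)$. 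For $C=2$ or $R=1$ we have $\oddlcmleq(2R+1)=2R+1$, so the upper bound specializes to $4R+2$. The matching lower bound in these special cases comes quickly: for $C=2$, \fullref{dichrome_is_dihedral} makes $G_{2,R}$ dihedral of order $2(2R+1)$ with Cayley graph a $(4R+2)$-cycle, and for $R=1$ the excursion argument below yields $\girth\geq 4R+2$.

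For the main lower bound $\girth\geq 4R+3$ when $C\geq 3$ and $R\geq 2$, let $w$ be a freely reduced relation of minimum length. The track of $w$ at $\bp$ is a closed walk in $T_{C,R}$, locally injective except at mirrors. Decompose it into excursions between consecutive visits to $\bp$: each excursion stays inside one branch of $T_{C,R}$ rooted at a neighbor $v_1$ of $\bp$, has shape $\bp\to v_1\to\cdots\to v_1\to\bp$, and must reach a mirror at distance $R-1$ from $v_1$, so has length at least $2R+1$. A single-excursion relation $w=cuc$ gives a shorter relation $u=1$, contradicting minimality, so $w$ has at least two excursions and $|w|\geq 4R+2$.

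The crucial step is to rule out $|w|=4R+2$. Equality forces exactly two excursions, each a palindrome of length $2R+1$ in $\stab(\bp)$ by \fullref{alternating_palindromes_fix_root}; writing $w=p_1p_2$ with $p_i$ palindromes, free reduction forces different first letters, and the involution property of odd-length palindromes in involutive generators forces $p_1=p_2$ as group elements. When both palindromes lie in a single two-color subgroup $\langle c_1,c_2\rangle$, they must be the two alternating palindromes of length $2R+1$, and their coincidence would give $(c_1c_2)^{2R+1}=1$, contradicting $\oddlcmleq(2R+1)>2R+1$ for $R\geq 2$. The main obstacle I anticipate is the remaining sub-case in which the two palindromes together use three or more distinct colors; I expect to resolve it by direct computation of their actions on well-chosen height-two vertices, exhibiting an invariant (such as the restriction of the fixed-point set to the first two height levels) that distinguishes distinct palindrome words as permutations.
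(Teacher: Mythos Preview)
Your upper bound via dichrome arc sizes is correct and matches the paper's approach, modulo the slip that ``for $C=2$ we have $\oddlcmleq(2R+1)=2R+1$'': this is false for $R\geq 2$; the correct reason the $C=2$ upper bound is $4R+2$ is simply that there is only one $\{c_1,c_2\}$-arc, of size $2R+1$, so $\ord(c_1c_2)=2R+1$.

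Your excursion argument at $\bp$ correctly yields $\girth\geq 4R+2$, but the push to $4R+3$ has a genuine gap. You reduce to showing that two distinct length-$(2R+1)$ palindromes with different first letters cannot coincide as permutations, dispatch only the sub-case where both lie in a single $\langle c_i,c_j\rangle$, and then leave the case of three or more colors to a hoped-for computation on height-$\leq 2$ vertices. There is no reason this should suffice: the track of such a palindrome at a height-one or height-two vertex immediately climbs to height $R$ and reflects, so what it does on the first two levels depends on the entire word, not on some small invariant. I do not see how to complete your argument along these lines without essentially redoing the whole analysis.

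The paper takes a different and cleaner route for the lower bound: it splits on whether the girth witness $w$ is dichrome. The dichrome case gives $|w|=2\oddlcmleq(2R+1)$ exactly, which exceeds $4R+2$ once $R\geq 2$. For non-dichrome $w$, write $w=xyz$ with $x$ the maximal dichrome prefix (length $\geq 2$) and $y$ the next $\leq R$ letters, and track a \emph{leaf} $v$ chosen so that the geodesic from $v$ to $\bp$ begins with the colors of $y$. Because the down-edge at $v$ is a third color, every letter of $x$ is a mirror color at $v$, so $v.x=v$; this forces $|y|=R$ and $v.xy=\bp$, and then the track of $z$ from $\bp$ must enter another branch, reach a mirror, return through $\bp$, and descend $R$ steps to $v$, giving $|z|\geq 3R+1$ and $|w|\geq 4R+3$. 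Tracking a well-chosen leaf instead of the root is precisely what bypasses the palindrome comparison you are stuck on.
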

\begin{proof}
  The girth when $C=2$ follows from \fullref{cor:dihedral_prime},
  so assume $C>2$.
  Let $w$ be a word realizing the girth.
  If $w$ is dichrome then its acts separately on each of the dichrome
  arcs of the same two colors. These arcs have $2r+1$ vertices for each
  $0\leq r\leq R$, so \fullref{cor:dihedral_prime} 
gives $|w|=2\oddlcmleq(2R+1)\geq 2(2R+1)$, with equality only if
$R=1$. 

  Now suppose $w$ is not dichrome, and decompose it as $w=xyz$ where
  $x$ is its maximal dichrome prefix, which has length at least 2, and $y$
  is the maximal subword of length at most $R$ following $x$.
  By hypothesis, $y$ is nonempty, so there is some leaf $v$ such that the geodesic from $v$ to $\bp$ has
  color sequence that agrees with $y$ for the length of $y$.
  If $|y|< R$ then $z$ is empty.
  However, if $w=xy$
is a girth witness then $v.w=v$, but $v.x=v$, so $v.xy=v.y$ is the
vertex at distance $|y|>0$ from $v$ along the geodesic from $v$ to
$\bp$, which is a contradiction. Thus, $|y|=R$ and $z$ is nonempty.
The last letter of $y$ is different from the first letter of $z$, so
$z$ leaves $\bp$ on a different edge than the track of $xy$ from $v$
arrived on. Thus, for $v.xyz=v$, we must have that the track of $z$
from $\bp$ starts on a different branch than that containing $v$,
travels to a mirror, reflects, then does some more travelling before
eventually passing back through $\bp$ and continuing to $v$.
This gives $|z|\geq
3R+1$, so $|w|\geq 4R+3$.
\end{proof}
A similar lower bound argument appears in
\cite[Section~2.1]{otto2024acyclicityfinitegroupsgroupoids}.

We now compare the upper bounds given by \fullref{girth_upper_bound}
and \fullref{lambda_girth}.
    Every $x\leq n$ factors as a product of an odd number with a
  power of 2 of 
  exponent at most   $2^{\lfloor \log_2(n)\rfloor}$, giving:
  \[\oddlcmleq(n)=\lcm\{2m+1\mid 1\leq 2m+1\leq
    n\}=\frac{\lcmleq(n)}{\lfloor \log_2(n)\rfloor}\]
A similar argument expresses the second Chebyshev function $\psi$ as
$\psi(n)=\ln(\lcmleq(n))$, or, equivalently, $\lcmleq(n)=e^{\psi(n)}$.
The Prime Number Theorem has an equivalent formulation as $\psi(n)\asymp
n$, and this can be made effective, for example by the following
estimate, which will be used in \fullref{sec:main_theorem}:
\begin{theorem}[{\cite[Theorem~3.3]{Dus18}}]\label{dusart}
  For $x\geq 2$:
  \[|\psi(x)-x|<\frac{.85x}{\ln(x)}\]
\end{theorem}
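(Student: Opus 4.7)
The plan is to apply the Riemann--von Mangoldt explicit formula and carefully bound the contribution of the nontrivial zeros of the Riemann zeta function. This is a classical effective Prime Number Theorem estimate, so the argument combines analytic methods with substantial numerical input rather than any short conceptual idea.

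First, I would start from the explicit formula
\[\psi_0(x) = x - \sum_{\rho} \frac{x^\rho}{\rho} - \ln(2\pi) - \tfrac{1}{2}\ln\!\bigl(1 - x^{-2}\bigr),\]
valid for $x>1$, where $\rho$ ranges over the nontrivial zeros of $\zeta$ and $\psi_0$ agrees with $\psi$ except at prime powers. This reduces the task of bounding $|\psi(x)-x|$ to controlling the sum over $\rho$, up to a harmless adjustment at prime powers and the small boundary terms.

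Second, I would split the zero sum at a height $T$. For zeros with $|\operatorname{Im}\rho|\leq T$, I would use the numerical verification of the Riemann Hypothesis up to height $T$, so that $\operatorname{Re}\rho = 1/2$ on this range; each such zero contributes at most $x^{1/2}/|\rho|$, and summation against the known zero-counting function $N(T)$ gives a total of order $\sqrt{x}$ times a polylogarithmic factor in $T$. For zeros with $|\operatorname{Im}\rho|>T$, I would invoke an explicit Vinogradov--Korobov style zero-free region of the form $\operatorname{Re}\rho \geq 1 - 1/\bigl(R_0 (\ln|\operatorname{Im}\rho|)^{2/3}(\ln\ln|\operatorname{Im}\rho|)^{1/3}\bigr)$ with an explicit constant $R_0$. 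This yields a tail contribution on the order of $x\exp(-c(\ln x)^{3/5}(\ln\ln x)^{-1/5})$, once $T$ is chosen as a suitable slowly growing function of $x$.

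Third, I would treat small $x$ (say $2\leq x\leq x_0$) by direct numerical computation of $\psi(x)$, verifying the target inequality on that range. Balancing the low-zero and tail contributions across the transition region, and optimizing the choice of $T = T(x)$, yields the stated constant $0.85$ in the main range.

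The main obstacle is keeping every constant explicit and sharp enough, rather than the soft analytic structure. The explicit zero-free region requires quantitative bounds on $\zeta'/\zeta$ in the critical strip; the numerical RH verification must be pushed to a height $T$ large enough for the tail estimate to become effective at moderate $x$; and the optimization of $T$ as a function of $x$ must be done with care so as not to lose the factor of $\ln(x)$ in the denominator. Dusart's theorem refines classical results of Rosser and Schoenfeld precisely by sharpening each of these three ingredients in turn.
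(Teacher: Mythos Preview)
The paper does not prove this statement at all: it is quoted verbatim as an external result, attributed to Dusart \cite{Dus18}, and used as a black box in the later girth and order estimates. There is no argument in the paper to compare against.

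Your sketch is broadly in the spirit of how such effective Prime Number Theorem bounds are actually obtained (explicit formula, truncation at height $T$, numerical RH verification for low zeros, explicit zero-free region for high zeros, direct computation for small $x$, then optimization of $T$), so it is not wrong as an outline of Dusart's method. But it is a high-level plan rather than a proof: every step hides substantial numerical work, and obtaining the specific constant $0.85$ requires the precise explicit constants in the zero-free region and zero-counting estimates that Dusart's paper supplies. For the purposes of the present paper none of this is needed---the theorem is simply cited.
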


\begin{lemma}\label{lem:upper_bound_comparison}
  Asymptotically in $R$ the bound $2N_{C,R}$ is smaller if $C\leq 8$ and
  $2\oddlcmleq(2R+1)$ is smaller if $C\geq 9$.
\end{lemma}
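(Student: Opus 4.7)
The plan is to take logarithms of both bounds for fixed $C$, observe that each is linear in $R$ to leading order, and identify the crossover. The critical point is $\ln(C-1)=2$, which falls strictly between $\ln 7$ and $\ln 8$, producing a threshold at $C=8$.

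First, from the closed form $N_{C,R}=\frac{C(C-1)^R-2}{C-2}$ one reads off
\[\ln(2N_{C,R}) = R\ln(C-1) + O_C(1)\]
as $R\to\infty$, with the implicit constant depending only on $C$.

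Next, to estimate $\oddlcmleq(2R+1)$, I would combine the factorization $\oddlcmleq(n) = \lcmleq(n)/2^{\lfloor\log_2 n\rfloor}$ (which writes each $m\leq n$ uniquely as an odd part times a power of $2$ bounded by $2^{\lfloor\log_2 n\rfloor}$) with the identity $\ln(\lcmleq(n))=\psi(n)$ recorded just above. Applying Dusart's effective Prime Number Theorem (\fullref{dusart}) at $x=2R+1$ yields $\psi(2R+1)=(2R+1)+O(R/\ln R)$. The contribution from $2^{\lfloor\log_2(2R+1)\rfloor}$ to the logarithm is only $\ln(2R+1)+O(1)$, which is negligible compared to $R$, so
\[\ln(2\oddlcmleq(2R+1)) = 2R + O(R/\ln R).\]

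Subtracting gives
\[\ln(2\oddlcmleq(2R+1)) - \ln(2N_{C,R}) = \bigl(2 - \ln(C-1)\bigr)R + o(R),\]
whose sign for large $R$ is determined by the sign of $2-\ln(C-1)$. Since $\ln 7 < 2 < \ln 8$, this is positive precisely when $C-1\leq 7$ and negative when $C-1\geq 8$, giving the stated dichotomy. There is no serious obstacle: the argument is a direct asymptotic comparison of two explicit formulas, with the only real input being an effective form of the Prime Number Theorem, which is supplied by \fullref{dusart}.
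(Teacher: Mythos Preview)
Your proposal is correct and follows essentially the same route as the paper: both reduce the comparison to whether $\ln(C-1)$ is less than or greater than $2$, equivalently whether $C-1$ is below or above $e^2\approx 7.4$, using the Prime Number Theorem (via \fullref{dusart}) to approximate $\ln(\oddlcmleq(2R+1))$ by $2R$ and the closed form for $N_{C,R}$ to approximate $\ln(N_{C,R})$ by $R\ln(C-1)$. The only cosmetic difference is that the paper computes the ratio $2\oddlcmleq(2R+1)/2N_{C,R}$ directly while you take logarithms and compare slopes; your version is arguably a bit tidier in writing the correction factor as $2^{\lfloor\log_2 n\rfloor}$ rather than $\lfloor\log_2 n\rfloor$, though this makes no difference to the asymptotic conclusion.
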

\begin{proof}
\[\frac{2\oddlcmleq(2R+1)}{2N_{C,R}}\asymp\frac{\frac{e^{\psi(2R+1)}}{\log_2(2R+1)}}{(C-1)^R}\asymp\frac{(\frac{e^2}{C-1})^R}{\log_2(2R+1)}\]
Whether the ratio goes to $\infty$ or 0 with $R$ hinges on
whether $C-1<e^2\approx 7.4$.
\end{proof}

\section{2--transitivity}\label{sec:primitivity}
\begin{proposition}\label{two_transitive}
  If $C>2$ then $G_{C,R}\act T_{C,R}$ is 2--transitive and not sharply 2--transitive. 
\end{proposition}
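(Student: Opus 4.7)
The plan is to use the standard equivalence: since $G_{C,R}\act T_{C,R}$ is already known to be transitive, 2-transitivity is equivalent to transitivity of $\stab(\bp)$ on $T_{C,R}\setminus\{\bp\}$. By \fullref{alternating_palindromes_fix_root}, this stabilizer is generated by palindromes of length $2R+1$, so the task is to show that this family of palindromes witnesses enough moves to act transitively on the $N_{C,R}-1$ non-root vertices.

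First I would handle transitivity on the $C$ neighbors of $\bp$. For distinct $c,c'\in C$, the alternating palindrome $w_{c,c'}:=(cc')^Rc$ lies in $\stab(\bp)$, and by \fullref{dichrome_is_dihedral} its restriction to the dichrome arc of colors $\{c,c'\}$ through $\bp$ is the reflection fixing $\bp$; in particular it swaps the two neighbors $\bp.c$ and $\bp.c'$ of $\bp$. Since $C\geq 3$, letting $c,c'$ vary yields every transposition on the set of $C$ neighbors, so $\stab(\bp)$ acts transitively (indeed as $\sym(C)$) on the neighbors of $\bp$.

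To extend transitivity to deeper vertices (relevant when $R\geq 2$), I would use the non-alternating palindromes $\delta c\bar\delta$. Crucially, the individual generators $c\in C$ do not preserve distance to $\bp$, so the action of a palindrome can shift a vertex between different heights; a direct trace in $T_{3,2}$ shows, for instance, that $c_1c_2c_3c_2c_1$ sends the height-$1$ vertex $\bp.c_1$ to a height-$2$ vertex. I would establish, by a case analysis on the track of $\delta c\bar\delta$ starting at a general non-root vertex $u$, that for any non-root target $u'$ one can choose $\delta$ and the mirror color $c$ (or a finite composition of such palindromes) realizing the move $u\mapsto u'$. Combined with the transitivity on the neighbors of $\bp$ established above, this gives transitivity of $\stab(\bp)$ on $T_{C,R}\setminus\{\bp\}$. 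The main obstacle is exactly in this combinatorial bookkeeping: tracks of palindromes bounce off mirrors in intricate ways, and checking that this flexibility suffices for every pair $(u,u')$ requires care.

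Finally, to rule out sharp 2-transitivity it is enough to exhibit a nontrivial element fixing two vertices. Pick pairwise distinct $c,c',c''\in C$, possible since $C\geq 3$. The palindrome $w_{c,c'}$ lies in $\stab(\bp)$ and is nontrivial since it swaps $\bp.c\leftrightarrow\bp.c'$. For any leaf $u\in\bdry_{c''}T_{C,R}$, both $c$ and $c'$ are mirror colors at $u$ (the only non-mirror color at $u$ is $c''$), so every letter of $w_{c,c'}$ acts at $u$ as a mirror reflection and therefore fixes it. Hence $w_{c,c'}$ lies in $\stab(\bp)\cap\stab(u)$ and is nontrivial, which precludes sharp 2-transitivity.
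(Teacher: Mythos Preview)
Your reduction to transitivity of $\stab(\bp)$ on $T_{C,R}\setminus\{\bp\}$, your treatment of the height-$1$ vertices via the alternating palindromes $w_{c,c'}=(cc')^Rc$, and your argument for non-sharpness all match the paper's proof essentially verbatim.

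The genuine gap is the middle step. You write that you ``would establish, by a case analysis on the track of $\delta c\bar\delta$ starting at a general non-root vertex $u$, that for any non-root target $u'$ one can choose $\delta$ and the mirror color $c$ (or a finite composition of such palindromes) realizing the move $u\mapsto u'$,'' and you yourself flag that ``the main obstacle is exactly in this combinatorial bookkeeping.'' That bookkeeping is the entire content of the proposition; a single worked example in $T_{3,2}$ does not substitute for it, and there is no general mechanism offered for how a composition of palindromes $\delta c\bar\delta$ systematically lowers the height of an arbitrary vertex. So as written this is a plan, not a proof.

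The paper's argument is in fact quite different from what you sketch. It does \emph{not} use non-alternating palindromes $\delta c\bar\delta$ at all. Instead it stays entirely within the dichrome world: given $v$ at height $h>1$, it looks at the $(c_1,c_2)$-dichrome arc through $v$ whose lowest vertex $v_1$ is at height $h-1$, containing $2(R-h)+3$ vertices. The alternating palindromes $x=c_1c_2\cdots$ and $y=c_2c_1\cdots$ of length $2R+1$ act on this arc through the dihedral quotient, and by \fullref{cor:dihedral_prime} they generate the full dihedral group on the arc precisely when $\gcd(2(R-h)+3,2R+1)=1$; in that case $v$ can be pushed down to $v_1$. When the gcd is $\geq 3$, an alternating word of a suitable composite length still realizes the reflection of the arc about its lowest point, which permits a lateral move to a neighboring branch; iterating produces a sequence of dichrome arcs whose vertex-counts run through consecutive odd numbers $2(R-h)+3,\,2(R-h)+5,\dots$ toward $2R+1$, and eventually one is coprime to $2R+1$ (at worst $2R-1$ is). This coprimality-driven descent is the missing idea in your proposal.
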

\begin{proof}
  We already know $G_{C,R}\act T_{C,R}$ is transitive,
  so 2--transitivity is equivalent to transitivity of 
  $\mathrm{Stab}(\bp)\act T_{C,R}\setminus\{\bp\}$.

  Consider any two colors $c_i\neq c_j$.
  The vertices $\bp.c_i$ and $\bp.c_j$ are contained in the $(c_i,c_j)$--dichrome arc
  through $\bp$.
  Words in $\langle c_i,c_j\rangle$ act on that dichrome arc as
  elements of $G_{2,R}$ act on $T_{2,R}$.
  In particular, by \fullref{dichrome_is_dihedral}, alternating words $c_ic_j\cdots$ and $c_jc_i\cdots$
  of length $2R+1$
  fix $\bp$ and exchange $\bp.c_i$ and $\bp.c_j$.
  Thus, $\stab(\bp)$ acts transitively on the set of vertices at
  height 1. 

Next we will show that for any vertex $v$ at height
$h>1$ there exists an element of $\stab(\bp)$ that moves $v$ to a
vertex at some lower positive height.
By induction, every vertex $v$ at positive height is in the same
$\stab(\bp)$ orbit as some vertex at height 1.
This will imply 2--transitivity, since we have already
shown that $\stab(\bp)$ is transitive on vertices at height 1. 

Let $v$ be a vertex at height $h>1$.
Suppose $c_1$ is the color of the down edge at $v_0:=v$ and $c_2$ is the
color of the down edge at $v_1:=v_0.c_1$.
Let $x:=c_1c_2\cdots$ and $y:=c_2c_1\cdots$ be alternating words of
length $2R+1$.
These are elements of $\stab(\bp)$, by
\fullref{alternating_palindromes_fix_root}. 
The $(c_1,c_3)$--dichrome arc $A_1$ through $v_0$ and $v_1$ has lowest vertex
$v_1$ and contains $2(R-(h-1))+1=2(R-h)+3$ vertices.
If $2(R-h)+3$ is prime relative to $2R+1$ then $x$ and $y$ generate the
full dihedral group $G_{2,R-h+1}$ acting on $A_1\cong T_{2,R-h+1}$, by \fullref{cor:dihedral_prime}.
In particular, there is an element of $\langle x,y\rangle$ that moves
$v_0$ to $v_1$, which is at height $h-1>0$, and we are done.

Suppose, to the contrary, that $D_1=\gcd(2(R-h)+3,2R+1)\geq 3$.
Consider the alternating word $z:=c_1c_3\cdots$ of length
$(2(R-h)+3)(2R+1)/D_1$.
Since $z$ is an alternating word whose length is a multiple of $2R+1$
we have $z\in\stab(\bp)$.
On the other hand, its length is an odd multiple of $2(R-h)+3$, so by 
\fullref{dichrome_is_dihedral} it fixes the lowest vertex on $A_1$ and
exchanges the two halves.
Thus, $v':=v_0.z=v_0.c_1c_3$ is in the same $\stab(\bp)$--orbit as $v_0$.
Conclude that $\stab(\bp)$ acts transitively on the vertices
immediately above $v_1$.
Furthermore, odd numbers with a nontrivial common divisor differ by at
least $2D_1$, so $D_1=\gcd(2(R-h)+3,2R+1)\geq 3$ implies
$h\geq 4$, which implies $v_2$ is at height at least 2.
In particular, there is a down edge at $v_2:=v_0.c_1c_2$.
It cannot be colored $c_2$, so without loss of generality it is
colored either $c_1$ or $c_3$.
In fact, we may assume it is colored $c_3$ by exchanging vertices $v_0$ and $v'$
and colors $c_1$ and $c_3$, if necessary. 

Let $A_2$ be the $(c_1,c_2)$--dichrome arc containing $v_0$, $v_1$,
and $v_2$.
By construction, the lowest vertex on $A_2$ is $v_2$, since
the down edge at $v_2$ is colored $c_3$.
The height of $v_2$ is $h-2$, so $A_2$ contains
$2(R-(h-2))+1=2(R-h)+5$ vertices.
Repeat the argument from $A_1$: if $2(R-h)+5$ is prime relative to
$2R+1$ then the alternating words $c_1c_2\cdots$ and $c_2c_1\cdots$ of
length $2R+1$ generate a subgroup of $\stab(\bp)$ that acts
transitively on $A_2$, so $v_0$ is in the same $\stab(\bp)$--orbit
as the vertex $v_2$, which is at lower positive height. 

The alternative is that $D_2=\gcd(2R+1,2(R-h)+5)\geq 3$.
As in the previous case, there are elements of $\stab(\bp)$ that fix
$v_2$ and exchange the two halves of $A_2$.
The same holds for every dichrome arc whose lowest vertex is $v_2$, so
$\stab(\bp)$ acts transitively on the upper neighbors of $v_2$.
Similarly, each such vertex is the lowest vertex on some dichrome arcs
containing $2(R-h)+3$ vertices, and there are elements of $\stab(\bp)$
reflecting such an arc across its lowest vertex.
Conclude that $\stab(\bp)$ acts transitively on the set vertices $v$
in $T_{C,R}$ such that $v_2$ is the unique vertex 2 below $v$.
Thus, without changing its $\stab(\bp)$--orbits, we may replace $v_1$
by $v_2.c_1$ and $v_0$ by $v_2.c_1c_3$.
Thus, exchanging colors $c_2$ and $c_3$ we have $v_1=v_0.c_1$ and
$v_2=v_1.c_2$ and the down edge at $v_2$ is colored $c_1$.
Define $v_3:=v_2.c_1$, and let $A_3$ be the $(c_1,c_2)$--dichrome arc
containing $v_0$.
Its lowest vertex is $v_3$, at height $h-3>0$, so the arc contains
$2(R-h)+7$ vertices.
In summary, we iteratively find dichrome arcs $A_1,\dots$ such
that $A_n$ contains 
$2(R-h)+2n+1$ vertices, one of which is in the same
$\stab(\bp)$--orbit as $v$, and whose lowest vertex $v_n$ is at height
$h-n$.
The iteration stops as soon as $2(R-h)+2n+1$ is prime relative to
$2R+1$, at which point we conclude that $v$ is in the same
$\stab(\bp)$--orbit as $v_n$.
This happens eventually, since the numbers $2(R-h)+2n+1$ are counting
up through consecutive odd numbers towards $2R+1$, and $2R-1$ is prime
relative to $2R+1$. 
\fullref{fig:transitive} illustrates an example in which several
lateral exchanges are necessary before the vertex can be moved down. 
\begin{figure}[h]
        \centering
        \begin{tikzpicture}
          \coordinate (o) at (0,0);
          \coordinate (r) at (-90:1);
          \coordinate (g) at (135:1);
          \coordinate (b) at (45:1);
          \coordinate[label={[label distance=0pt] 180:$v''$}] (gb) at ($(g)+(90:1)$);
          \coordinate[label={[label distance=0pt] -90:$v'''$}] (gr) at ($(g)+(180:1)$);
          \coordinate[label={[label distance=0pt] 0:$v'$}]  (bg) at ($(b)+(90:1)$);
          \coordinate[label={[label distance=0pt] 0:$v$}] (br) at
          ($(b)+(0:1)$);
          \coordinate (rg) at ($(r)+(150:1)$);
          \coordinate (gbg) at ($(gb)+(60:.5)$);
          \coordinate (gbr) at ($(gb)+(120:.5)$);
          \coordinate (grb) at ($(gr)+(150:.5)$);
          \coordinate (grg) at ($(gr)+(210:.5)$);
          \coordinate (bgr) at ($(bg)+(60:.5)$);
          \coordinate (bgb) at ($(bg)+(120:.5)$);
          \coordinate (brb) at ($(br)+(-60:.5)$);
          \coordinate (brg) at ($(br)+(60:.5)$);
          \draw[ultra thick, red] (o)--(r) (g)--(gr);
          \draw[ultra thick, red] (b)--(br);
          \draw[ultra thick,blue] (r)--($(r)-(0,1)$);
          \draw[ultra thick, green] (o)--(g) (b)--(bg);
          \draw[ultra thick,green] (r)--(rg);
          \draw[ultra thick, blue] (o)--(b) (g)--(gb);
          \draw[ultra thick, green, -|] (gb)--(gbg);
          \draw[ultra thick, red, -|] (gb)--(gbr);
          \draw[ultra thick, green, -|] (gr)--(grg);
          \draw[ultra thick, blue, -|] (gr)--(grb);
          \draw[ultra thick, blue, -|] (bg)--(bgb);
          \draw[ultra thick, red, -|] (bg)--(bgr);
          \draw[ultra thick, green, -|] (br)--(brg);
          \draw[ultra thick, blue, -|] (br)--(brb);
          \filldraw (o) circle (2pt) (r) circle (2pt) (g) circle (2pt)
          (b) circle (2pt) (gr)
          circle (2pt) (gb) circle (2pt) (br) circle (2pt) (bg) circle
          (2pt);
\begin{scope}[dashed,decoration={
    markings,
    mark=at position 0.5 with {\arrow{>}}}
    ] 
    \draw[postaction={decorate}] (br)--(bg);
    \draw[postaction={decorate}] (bg)--(gb);
    \draw[postaction={decorate}] (gb)--(gr);
    
\end{scope}        
        \end{tikzpicture}
        \caption{In $G_{3,7}$ suppose $v$ is at height
          7 on the $(r,b)$--dichrome arc containing $\bp$.
          All alternating dichrome words of length 15 fix $\bp$, but none
          of them move $v$, $v'$, or $v''$ closer to $\bp$, since the
          dichrome arcs on which these vertices sit have lengths dividing 15. 
          However, an $(r,g)$ alternating word of length 15 swaps $v$ and
          $v'$, a $(b,g)$ alternating word of length 15 swaps $v'$ and
          $v''$, and an $(r,b)$ alternating word of length 15 swaps $v''$
          and $v'''$. Finally, the $(r,g)$--dichrome arc containing
          $v'''$ has 7 vertices, prime relative to 15. The alternating
          word $rg\cdots r$ of length 15 moves $v'''$ closer to
          $\bp$.
}
        \label{fig:transitive}
      \end{figure}

        Finally, $G_{C,R}$ is not sharply 2--transitive, since the alternating
word $c_1c_2\cdots$ of length $2R+1$ fixes $\bp$ and every vertex of
$\bdry_{c_3}T_{C,R}$, but is nontrivial, since it exchanges the two
sides of the $(c_1,c_2)$--dichrome arc through $\bp$.
\end{proof}

\section{Proof of the main theorem}\label{sec:main_theorem}
  The tree $T_{C,R}$ has $N_{C,R}$--many vertices and  $(N_{C,R}-1)/C=E_{C,R}$--many edges
of each color, for:
   \[N_{C,R}=
  \begin{cases}
    2R+1&\text{if }C=2\\
    \frac{C(C-1)^R-2}{C-2}&\text{if }C>2
  \end{cases}
  \quad\quad
  E_{C,R}=
  \begin{cases}
    R&\text{if }C=2\\
    \frac{(C-1)^R-1}{C-2}&\text{if }C>2
  \end{cases}
\]

Since each generator of $G_{C,R}$ is a product of $E_{C,R}$--many
disjoint transpositions,  $G_{C,R}<\alt(N_{C,R})$ if and only
if $E_{C,R}$ is even.
But $E_{C,R}$ is odd if $C$ is odd, and if $C$ is even then the parity of $E_{C,R}$ alternates.

For $R=1$, the generators plus the elements used to permute neighbors of
the root in the proof of \fullref{two_transitive} give all the
transpositions, so $G_{C,1}\cong\sym(C+1)$, so assume $R>1$.

By \fullref{rainbow_cycle}, the word $w=\prod_{c\in C}c$ is an $N_{C,R}$--cycle,
fixing 0 points.
Apply \fullref{jones}. 
We are not in \fullref{jones} case~\eqref{item:sporadic}, since
11 and 23 do not occur as possible values of $N_{C,R}$ when $R>1$.
We are not in \fullref{jones} case~\eqref{item:affine}, because $\mathrm{AGL}_1(p)$ is sharply 2--transitive, while $G_{C,R}$ is not.
This leaves the possibility of \fullref{jones}
case~\eqref{item:projective}, that $G_{C,R}$ is a subgroup of some
projective semi-linear group with the same permutation degree. 

One way to rule out the projective case is to produce cycles in
$G_{C,R}$ with many fixed points, which we can do in certain cases.

Suppose that $C'\subset C$ is a proper subset containing at least two
colors.
Consider the word $w=\prod_{c\in C'}c$.
By the argument of \fullref{rainbow_cycle}, $w$ acts cyclically on
$C'$--colored components of $T_{C,R}$.
These are disjoint subtrees whose sizes are $N_{C',r}$ for each $0\leq
r\leq R$, so the cycle decomposition of $w$ consists of cycles of all of
these sizes. Let $m:=\prod_{1\leq r<R}N_{C',r}$.
Then $w^m$ acts trivially on all $C'$--colored subtrees except
possibly the
unique largest one, the one
containing $\bp$, of size $N_{C',R}$.
If $N_{C',R}$ is prime relative to $N_{C',r}$ for all $r<R$ then 
$N_{C',R}$ is prime relative
to $m$, so $w^m$ is an $N_{C',R}$--cycle.
Thus, the cycle decomposition of $w^m$ is a single $N_{C',R}$--cycle,
and, 
since $R>1$, $N_{C,R}-N_{C',R}>2$, so the cycle $w^m$ fixes more than 2 vertices.
Then \fullref{jones} says $G_{C,R}$ contains $\alt(N_{C,R})$.
This gives:
\begin{equation}
  \label{eq:primary_cycle}
  \parbox{0.8\linewidth}{
   If there is $C'\subset C$ with $2\leq C'<C$ and $N_{C',R}$ is
   prime relative to $N_{C',r}$ for all $r<R$ then $G_{C,R}$ contains $\alt(N_{C,R})$.
  }
\end{equation}
This condition is true, in particular, if $N_{2,R}=2R+1$ is prime, so
for all $C>2$ there are infinitely many $R$, including all $R\leq 3$
for which $G_{C,R}$ contains $\alt(N_{C,R})$.

Now consider $C'=C-1$, and consider the $C'$--colored subtrees of
$T_{C,R}$.
As before, there is a unique largest one, containing $\bp$, but this
time there is also a unique second largest one, rooted at $\bp.c$ for
the unique $c\in C\setminus C'$.
Thus, for $w$ the product of colors of $C'$, the cycle decomposition
of $w$ consists of cycles of sizes $N_{C',r}$ for all $0\leq r\leq R$,
and among these there is a unique $N_{C',R-1}$ cycle.
Let $m:=\prod_{1\leq r\leq R,\,r\neq R-1}N_{C',r}$.
If $N_{C-1,R-1}$ is prime relative to all of the other $N_{C-1,r}$
then  $w^m$ is an $N_{C-1,R-1}$ cycle, and
$N_{C,R}-N_{C-1,R-1}>2$, so \fullref{jones} says $G_{C,R}$ contains $\alt(N_{C,R})$.
\begin{equation}
  \label{eq:secondary_cycle}
  \parbox{0.8\linewidth}{
    If $N_{C-1,R-1}$ is prime relative to $N_{C-1,r}$ for all
    $r<R-1$ and for $r=R$, then $G_{C,R}$ contains $\alt(N_{C,R})$.
  }
\end{equation}

\fullref{tab:N} gives the prime factorizations of $N_{C,R}$.
By \eqref{eq:primary_cycle}, if an entry $N_{C,R}$ is prime
relative to every entry above it in its column then $G_{D,R}$ contains
$\alt(N_{D,R})$ for all $D>C$, and the entries $N_{D,R}$ to the right of $N_{C,R}$ in
its row are colored violet in
\fullref{tab:N}.
For example all of the row $R=2$ for $C\geq 3$ is violet because $N_{2,2}=5$ is
prime relative to $N_{2,1}=3$. 

Condition \eqref{eq:secondary_cycle}, applies to say that
$G_{C,R}$ contains $\alt(N_{C,R})$ whenever we look at the entry $N_{C-1,R-1}$ above and to
the left of $N_{C,R}$, and if that entry is prime relative to the
entry $N_{C-1,R}$ immediately below it and to every entry $N_{C-1,r}$
above it in its column.
We color such entries $N_{C,R}$ in \fullref{tab:N} blue if they are
not already violet.
For example  $G_{3,4}$ contains $\alt(N_{3,4})$, and entry $R=4$, $C=3$ is blue, because 7
is prime relative to 9 and to both of 3 and 5.

  \begin{table}[h]
    \centering
    {\tiny
    \begin{tabular}{|c||c|c|c|c|c|c|c|c|c|c|c|c|}
        \hline			
  $_R\backslash^C$&2&3&4&5&6&7\\
      \hline
      1&3&\color{violet}$2^2$&\color{violet}5&\color{violet}$2\cdot 3$&\color{violet}7&\color{violet}$2^3$\\
      2&5&\color{violet}$2\cdot 5$&\color{violet}17&\color{violet}$2\cdot
                                                   13$&\color{violet}37&\color{violet}$2\cdot
      5^2$\\
      3&7&\color{violet}$2\cdot 11$&\color{violet} 53&\color{violet}$2\cdot 53$&\color{violet}$11\cdot 17$&\color{violet}$2\cdot
      151$\\
      4&$3^2$&\color{blue}$2\cdot 23$&\color{red}$7\cdot 23$&\color{violet}$2\cdot
                                                   3\cdot
                                                   71$&\color{violet}
                                                        937&\color{violet}$2\cdot
      907$\\
      5&11&\color{violet}$2\cdot 47$&\color{violet}$5\cdot
                                     97$&\color{violet}$2\cdot
                                          853$&\color{violet}$43\cdot
                                                109$&\color{violet}$2\cdot
      5443$\\
      6&13&\color{violet}$2\cdot 5\cdot 19$&\color{violet}$31\cdot 47$&\color{violet}$2\cdot 3413$&\color{violet}$23\cdot
                                                          1019$&\color{violet}$2\cdot
      11\cdot 2969$\\
      7&$3\cdot 5$&\color{blue}$2\cdot 191$&\color{red}4373&\color{violet}$2\cdot 3^2\cdot 37\cdot
                                      41$&\color{violet}$7\cdot
                                           16741$&\color{violet}$2\cdot
      5\cdot 39191$\\
      8&17&\color{violet}$2\cdot 383$&\color{violet} 13121&\color{violet}$2\cdot 13\cdot 4201$&\color{violet}$11\cdot
                                                     53267$&\color{violet}$2\cdot
      673\cdot 1747$\\
      9&19&\color{violet}$2\cdot 13\cdot 59$&\color{violet}$5\cdot 7873$&\color{violet}$2\cdot
                                              218453$&\color{violet}
                                                       2929687&\color{violet}$2\cdot
      7054387$\\
      10&$3\cdot 7$&\color{blue}$2\cdot 5\cdot 307$&\color{red}$7\cdot 16871$&\color{red}$2\cdot 3\cdot
                                                        291271$&\color{red}$1487\cdot
                                                                 9851$&\color{violet}$2\cdot
      13\cdot 3255871$\\
      11&23&\color{violet}$2\cdot 37\cdot 83$&\color{violet}$19\cdot 29\cdot 643$&\color{violet}$2\cdot
                                                       3495253$&\color{violet}$59\cdot
                                                                 349\cdot
                                                                 3557$&\color{violet}$2\cdot
      67\cdot 631\cdot 6007$\\
      12&$5^2$&\color{blue}$2\cdot 6143$&\color{red}1062881&\color{violet}$2\cdot 13981013$&\color{violet}$577\cdot
                                                        634681$&\color{violet}$2\cdot
      5\cdot 163\cdot 271\cdot 6899$\\
  \hline  
\end{tabular}}
    \caption{Prime factorizations of $N_{C,R}$.}
    \label{tab:N}
  \end{table}
  For the remaining 6 entries in the table with $C>2$,
which are colored red, we checked by brute force computation that
there is no $d\geq 2$ and prime power $q$ such that
$(q^d-1)/(q-1)=N_{C,R}$.
However, we cannot always exclude such an equality. 
For example, when $C$ is a prime power, $q=C^2$, and $R=d=2$ there is a
coincidental equality $N_{C,R}=(q^d-1)/(q-1)$.
These particular examples do not bother us, since we have already
established the $R=2$ case of the theorem, but we do not know
if there are any other such coincidences for larger $R$, so we turn to
another approach for ruling out $G_{C,R}\leq \pgammal_d(q)$.
The idea is to demonstrate an element of $G_{C,R}$ whose order is too
high relative to $N_{C,R}$ for it to occur in a projective semi-linear
group of the same permutation degree.

For an element $g$ in a group $G$, let $\ord(g)$ be the order of $g$
in $G$, and let $\meo(G):=\max\{\ord(g)\mid g\in G\}$.
The group $\pgammal_d(q)$ is a semi-direct product of $\pgl_d(q)$ with
the Galois group of the field extension of $\mathbb{F}_q$ over
$\mathbb{F}_p$, so, from the known value of $\meo(\pgl_d(q))$ 
\cite[Corollary~2.7]{MR3391897} we get:
\[\meo(\pgammal_d(q))\leq
\log_p(q)\cdot\meo(\pgl_q(d))=\log_p(q)\frac{q^d-1}{q-1}\]
If $G_{C,R}\leq \pgammal_d(q)$ then
$N_{C,R}=\frac{q^d-1}{q-1}>q^{d-1}$, so
$q<N_{C,R}^{\frac{1}{d-1}}$, hence:
\begin{equation}
  \begin{aligned}  
    \ln(\meo(\pgammal_d(q)))&\leq\ln\left(\log_p(q)\frac{q^d-1}{q-1}\right)\\
    &<\ln\left(\frac{N_{C,R}\ln(N_{C,R})}{(d-1)\ln(p)}\right)\\
    &\leq \ln\left(N_{C,R}\ln(N_{C,R})/\ln(2)\right)
  \end{aligned}\label{eq:meo}
\end{equation}
This gives a bound for  $\ln(\meo(\pgammal_d(q)))$, when
$\pgammal_d(q)$ is of degree $N_{C,R}$, that
asymptotically grows linearly in $R$ with slope $\ln(C-1)$.

Now we bound $\meo(G_{C,R})$ from below, first specializing to $C=3$.
Consider the element $c_1c_2\in G_{3,R}$.
\fullref{lambda_girth} and \fullref{dusart} give:
\begin{align*}
\ln(ord(c_1c_2))&=\ln\left(\frac{\exp(\psi(2R+1))}{\lfloor
              \log_2(2R+1)\rfloor}\right)\\
  &>\ln\left(\frac{\ln(2)\exp((1-\frac{.85}{\ln(2R+1)})(2R+1))}{\ln(2R+1)}\right)
\end{align*}
This bound asymptotically grows linearly in $R$ with slope 2, which is
higher than the asymptotic slope for $\pgammal_d(q)$ so $\ln(ord(c_1c_2))>
\ln(\meo(\pgammal_d(q)))$ for sufficiently large $R$, implying $G_{3,R}\nleq \pgammal_d(q)$. 
In fact, \fullref{fig:landau_plot_3} shows this already happens for
$R\geq 6$.
We have already shown $G_{3,R}\cong\sym(N_{3,R})$ for $R<13$, so this
completes the $C=3$ case.

\begin{figure}[ht]
\centering
\begin{tikzpicture}\tiny
  \begin{axis}[
    width=0.8\textwidth,
    height=4cm, 
    xlabel={$R$},
    ylabel={$$},
    xmin=1, xmax=9,
    xtick={1,2,3,4,5,6,7,8,9},
    legend pos=north west,
]
\addplot[mark=*,
mark size=1pt,
color=blue,
thick
]
coordinates {
(1, 0.21832922390320147)
(2, 1.5169286122128747)
(3, 2.910062131740043)
(4, 4.364627029394038)
(5, 5.859642836677944)
(6, 7.383471084643579)
(7, 8.929073679839039)
(8, 10.491859564797757)
(9, 12.06865293879025)
};
\addlegendentry{lower bound for $\ln(\ord(c_1c_2))$}

\addplot[mark=x,
mark size=1.5pt,
color=red,
thick
] coordinates {
(1, 2.0794415416798357)
(2, 3.5031304588236662)
(3, 4.586063771501729)
(4, 5.537664330567643)
(5, 6.423460174662311)
(6, 7.271198065231224)
(7, 8.094554806804197)
(8, 8.900985075575319)
(9, 9.69489074280599)
};
\addlegendentry{$\ln(N_{3,R}\ln N_{3,R}/\ln(2))$}

\end{axis}
\end{tikzpicture}
\caption{The lower bound for $\ln(\meo(G_{3,R}))$ is already greater
  than the upper bound for $\ln(\meo(\pgammal_d(q))$ when $R=6$.}
\label{fig:landau_plot_3}
\end{figure}

The same argument would work for $C\leq 8$, for sufficiently large
$R$, but, now that we know $G_{3,R}$ is symmetric, it turns out that
$c_1c_2$ is not a particularly good choice for a high order element.
The largest order element of $\sym(n)$ has order equal to the maximum,
over partitions of $n$, of the least common multiple of the numbers in
the partition.
The function\footnote{The Landau function is usually denoted $g(n)$ in
the literature.} $\Landau\from n\mapsto \meo(\sym(n))$ is called the
\emph{Landau function}, after Landau, who proved
\cite{landau1909handbuch} that $\ln(\Landau(n))\asymp\sqrt{n\ln(n)}$.
 There are effective lower bounds for sufficiently large inputs, cf
 \cite[(1.6)]{MR979940}:
 \begin{equation}
   \label{eq:Landau}
   \ln(\Landau(n))\geq\sqrt{n\ln(n)}\quad\quad\text{ for }n\geq 906
 \end{equation}
There are also several thousand precomputed values for small $n$ \cite[\href{https://oeis.org/A000793}{A000793}]{oeis}.

For $C>3$, we induct on the number of colors: Suppose that for
$2<C'<C$ we know that $G_{C',R}$ contains $\alt(N_{C',R})$ for all
sufficiently large $R$.
Let $H$ be the subgroup of $G_{C,R}$ generated by a set $C'$
consisting of $C-1$
colors.
Then $H$ surjects onto $G_{C',R}$ and $G_{C',R-1}$ via the
action on the largest and second largest $C'$--colored components of $T_{C,R}$.
If $C'$ or $R'$ is odd let $R':=R$, and if both are even let
$R':=R-1$, so that by the induction hypothesis, once $R$ is large
enough, $G_{C',R'}\cong\sym(N_{C',R'})$.
Take an element $h$ of maximal order $\Landau(N_{C',R'})$ in
$G_{C',R'}$, and lift it to an element $\tilde h\in H$.
Then $\ord(\tilde h)$ is a multiple of $\ord(h)$, so:
\[\ln(\meo(G_{C,R}))\geq\ln(\ord(\tilde
  h))\geq\ln(\Landau(N_{C',R'}))\]
  Thus,   $\ln(\meo(G_{C,R}))\succeq\sqrt{N_{C',R'}\ln(N_{C',R'})}\succeq
  (\sqrt{C-2})^{R-1}$ is exponential in $R$, while
  $\ln(\meo(\pgammal_d(q)))\asymp R\ln(C-1)$ is linear.
  Conclude that for sufficiently large $R$, $G_{C,R}$ contains
  elements whose order is too high in relation to $N_{C,R}$ to belong
  to a projective semi-linear group of the same permutation degree.

  Finally, we argue that the asymptotics take over quickly, well within
  the bound of $R\leq 12$ for which we have already established the
  desired result by other methods.
   For $3<C<10$ we used precomputed values of $\Landau$ to check
  that $\ln(\Lambda(N_{C',R'}))>\ln(N_{C,R}\ln(N_{C,R})/\ln(2))$ for
  all $R\geq 3$.
  Figures
 \ref{fig:landau_plot_4} and \ref{fig:landau_plot_5} give plots for $C=4,5$.
  \begin{figure}[ht]
\centering
\begin{tikzpicture}\tiny
  \begin{axis}[
    width=0.8\textwidth,
    height=3.5cm, 
    xlabel={$R$},
    ylabel={$$},
    xmin=1, xmax=7,
    xtick={1,2,3,4,5,6,7},
    legend pos=north west,
]
\addplot[mark=*,
mark size=1pt,
color=blue,
thick
] coordinates {
(1, 1.3862943611198906)
(2, 3.4011973816621555)
(3, 6.040254711277414)
(4, 11.003099341537322)
(5, 18.76356637075074)
(6, 29.607900673056914)
(7, 46.153176282243166)
};
\addlegendentry{$\ln(\Landau(N_{3,R}))$}

\addplot[mark=x,
mark size=1.5pt,
color=red,
thick
] coordinates {
(1, 2.451835828342875)
(2, 4.241137789412662)
(3, 5.715644455991693)
(4, 7.073504958751404)
(5, 8.372651199315444)
(6, 9.636346395636709)
(7, 10.875947718093744)
};
\addlegendentry{$\ln(N_{4,R}\ln N_{4,R}/\ln(2))$}

\end{axis}
\end{tikzpicture}
\caption{The lower bound for $\ln(\meo(G_{4,R}))$ is already greater
  than the upper bound for $\ln(\meo(\pgammal_d(q))$ when $R\geq 3$.}
\label{fig:landau_plot_4}
\end{figure}
 \begin{figure}[ht]
\centering
\begin{tikzpicture}\tiny
  \begin{axis}[
    width=0.8\textwidth,
    height=3.5cm, 
    xlabel={$R$},
    ylabel={$$},
    xmin=1, xmax=7,
    xtick={1,2,3,4,5,6,7},
    legend pos=north west,
]
\addplot
[mark=*,
mark size=1pt,
color=blue,
thick
]coordinates {
(1, 1.791759469228055)
(2, 1.791759469228055)
(3, 12.794858810765376)
(4, 12.794858810765376)
(5, 53.82887695186044)
(6, 53.82887695186044)
(7, 197.242028734783)
};
\addlegendentry{$\ln(\Landau(N_{4,R'}))$}

\addplot
[mark=x,
mark size=1.5pt,
color=red,
thick
]coordinates {
(1, 2.741470470592379)
(2, 4.805752599412603)
(3, 6.569705193669302)
(4, 8.221744046054491)
(5, 9.815546745296112)
(6, 11.37299150985629)
(7, 12.90521835073706)
};
\addlegendentry{$\ln(N_{5,R}\ln N_{5,R}/\ln(2))$}

\end{axis}
\end{tikzpicture}
\caption{The lower bound for $\ln(\meo(G_{5,R}))$ is already greater
  than the upper bound for $\ln(\meo(\pgammal_d(q))$ when $R\geq 3$.}
\label{fig:landau_plot_5}
\end{figure}
  
For $C\geq 10$ and $R\geq 3$ we have $N_{C,R}>906$, so the bound of
\eqref{eq:Landau} applies, giving:
\begin{align*}
  \frac{\ln(\Landau(N_{C-1,3}))}{\ln(N_{C,3}\ln(N_{C,3})/\ln(2))}\geq\frac{\sqrt{N_{C-1,3}\ln(N_{C-1,3})}}{\ln(N_{C,3}\ln(N_{C,3})/\ln(2))}
\end{align*}
\fullref{fig:landau_plot_10} shows this ratio is already large for
$R\geq 3$, so the exponential function dominates almost immediately.
  \begin{figure}[ht]
\centering
\begin{tikzpicture}\tiny
  \begin{axis}[
    width=0.8\textwidth,
    height=3.5cm, 
    xlabel={$C$},
    ylabel={},
    xmin=10, xmax=20,
    xtick={10,11,12,13,14,15,16,17,18,19,20},
    legend pos=north west,
]
\addplot
[mark=*,
mark size=1pt,
color=blue,
thick
]coordinates {
(10, 7.180617945783421)
(11, 8.350049580891758)
(12, 9.568207606910402)
(13, 10.832447585318665)
(14, 12.14050340555306)
(15, 13.490403510938064)
(16, 14.880411483648091)
(17, 16.308982425788503)
(18, 17.774730034772332)
(19, 19.276401176427388)
(20, 20.812855868631758)
};

\end{axis}
\end{tikzpicture}
\caption{$\frac{\sqrt{N_{C-1,3}\ln(N_{C-1,3})}}{\ln(N_{C,3}\ln(N_{C,3})/\ln(2))}$
vs $C$ for $C\geq 10$ with $R=3$ fixed}
\label{fig:landau_plot_10}
\end{figure}

\section{Further remarks}\label{sec:further_remarks}
We conclude with a few remarks on some well-known open questions
surrounding Biggs tree groups for $C>2$.

\begin{question}
  What is the precise girth of $\cay(G_{C,R},C)$?
\end{question}
There is quite a wide gap between the linear, in terms of $R$, girth lower bound and near
exponential upper bounds given by \fullref{girth_upper_bound}  and
\fullref{lambda_girth}.

\begin{question}
  What is the diameter of  $\cay(G_{C,R},C)$?
\end{question}
Using the main theorem:
$\cay(G_{C,R},C)$ has at least $N_{C,R}!/2$ vertices.
If $D$ is the diameter, the ball of radius $D$ contains all of these. 
A ball of radius $r$ in a $C$--regular graph contains at most the number
$N_{C,r}$ of vertices in a ball in the tree of the same valence.
This forces $D$ to grow
at least like $R(C-1)^R$. 

\begin{proposition}\label{not_dg_bounded}
  For any $C>2$ and sequence $R\to\infty$ the
  Cayley graphs $\cay(G_{C,R},C)$ do not form a bounded
  diameter-to-girth ratio family. 
\end{proposition}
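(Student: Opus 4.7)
The plan is to combine the diameter lower bound sketched in the paragraph preceding the proposition with the girth upper bound from \fullref{girth_upper_bound}. By the main theorem, $|G_{C,R}|\geq N_{C,R}!/2$ when $C>2$, and a ball of radius $r$ in any $C$--valent graph contains at most $N_{C,r}$ vertices, the count in a $C$--regular tree. If $D$ is the diameter of $\cay(G_{C,R},C)$, then a ball of radius $D$ must contain the whole group, so $N_{C,D}\geq N_{C,R}!/2$. Taking logarithms and applying Stirling, this forces $D$ to be of order at least $N_{C,R}\ln(N_{C,R})/\ln(C-1)$. Since $N_{C,R}\asymp (C-1)^R$ for fixed $C$, this is $\asymp R(C-1)^R$.

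On the other hand, \fullref{girth_upper_bound} bounds the girth by $g\leq 2N_{C,R}\asymp (C-1)^R$. Dividing the two asymptotics yields a lower bound for $D/g$ of order $R$, and hence $D/g\to\infty$ as $R\to\infty$, which is exactly the statement of the proposition.

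I do not expect a serious obstacle. The argument is essentially immediate once both bounds are in hand, and the main theorem has already been proved to furnish the needed superexponential lower bound on $|G_{C,R}|$. The only minor bookkeeping is to note that the weaker girth bound of \fullref{girth_upper_bound} already suffices for the ratio to blow up; the sharper bound from \fullref{lambda_girth} would only improve the rate, not affect the qualitative conclusion. Similarly, the constants hidden in $\asymp$ depend on $C$, but since $C$ is fixed along the sequence $R\to\infty$, this causes no trouble.
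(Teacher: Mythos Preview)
Your proposal is correct and matches the paper's own proof essentially line for line: both combine the diameter lower bound $D\succeq R(C-1)^R$ (obtained from the main theorem via the Moore-type ball estimate and Stirling) with the girth upper bound $g\leq 2N_{C,R}\asymp (C-1)^R$ from \fullref{girth_upper_bound}, giving $D/g\succeq R\to\infty$. The paper compresses this into a single displayed ratio, while you spell out the intermediate steps, but the content is identical.
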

\begin{proof}
 The family $\cay(G_{C,R},C)$ consists of $C$--regular graphs with girth
going to infinity, but since the order of the groups grows so fast, even with the most optimistic bounds on
diameter and girth the ratio cannot stay bounded:
\[\frac{\diam(\cay(G_{C,R},C))}{\girth(\cay(G_{C,R},C))}\succeq\frac{R(C-1)^R}{(C-1)^R}\to
  \infty\qedhere\]
\end{proof}

\begin{question}
  Is there a $C>2$ such that for some sequence of $R$'s the family
  $\cay(G_{C,R},C)$ is an expander family?
\end{question}
The super-exponential diameter estimate above is in terms of $R$, not
$|G_{C,R}|$, so it is not necessarily an obstacle to being an
expander family.

\section*{Acknowledgements}
This research was supported in part by the Austrian Science
Fund (FWF) \href{https://doi.org/10.55776/PAT7799924}{10.55776/PAT7799924} and
the Austria-Slovakia research cooperation grant ``Constructions of
expanders and extremal graphs'', OeAD WTZ SK 14/2024 and APVV
SK-AT-23-0019.

I first learned the question of determining the isomorphism types of the Biggs tree
groups from Goulnara Arzhantseva in 2014.
At that point I worked out the conditions \eqref{eq:primary_cycle} and
\eqref{eq:secondary_cycle}  of the main
theorem, but was dissatisfied with the incompleteness
of the 
classification and by \fullref{not_dg_bounded},  so the
project was shelved until Robert Jajcay proposed the same problem in a 2024 Workshop
of the bilateral cooperation project mentioned above.
I thank the two of them, as well as Tatiana Jajcayov\'{a}, for
organizing the cooperation project and encouraging me to write this
paper, and for comments on an earlier draft.


\bibliographystyle{hypersshort}
\bibliography{Biggs}

\end{document}